\definecolor{lightgray}{gray}{0.95}
\newtheorem{theorem}{Theorem}
\newtheorem{lemma}[theorem]{Lemma}
\newtheorem{remark}{Remark}
\newcommand{\argmin}{{\rm arg}\!\min}
\newcommand{\J}{\mathcal{J}}
\newcommand{\N}{\mathbb{N}}
\newcommand{\R}{\mathbb{R}}
\newcommand{\inner}[2]{\langle{#1},{#2}\rangle}
\newcommand{\X}{\mathcal{X}}
\newcommand{\D}{\mathcal{D}}
\newcommand{\ds}{\displaystyle}
\newcommand{\vgap}{\vspace{.1in}}
\newcommand{\Rn}{\mathbb{R}^n}
\newcommand{\bi}{\begin{itemize}}
\newcommand{\ei}{\end{itemize}}
\newcommand{\ba}{\begin{array}}
\newcommand{\ea}{\end{array}}
\newcommand{\mgap}{\vspace{.1in}}
\begin{document}

\title{Improved convergence rates for the multiobjective Frank-Wolfe method}

\author{
    Douglas S. Gon\c{c}alves
    \thanks{Departamento de Matem\'atica, Universidade Federal de Santa Catarina, Florian\'opolis, SC 88040-900, Brazil. (E-mail: {\tt
       douglas.goncalves@ufsc.br}). The work of this author was supported in part by CNPq (Conselho Nacional de Desenvolvimento Cient\'{i}fico e Tecnol\'ogico)  Grant 305213/2021-0.}
  \and  Max L. N. Gon\c calves
  \thanks{IME, Universidade Federal de Goi\'as, Goi\^ania, GO 74001-970, Brazil. (E-mails: {\tt
       maxlng@ufg.br} and {\tt jefferson@ufg.br}). The work of these authors was
    supported in part by CNPq (Conselho Nacional de Desenvolvimento Cient\'{i}fico e Tecnol\'ogico) Grants  405349/2021-1,  304133/2021-3 and  312223/2022-6.}
   \and Jefferson G.  Melo\footnotemark[2]
}

\maketitle

\vspace{.5cm}

\begin{abstract} 
This paper analyzes the convergence rates of the {\it  Frank-Wolfe } method for solving convex constrained multiobjective optimization. We establish improved convergence rates under different assumptions on the objective function, the feasible set, and the localization of the limit point of the sequence generated by the method. In terms of the objective function values,  we firstly show that if the objective function is strongly convex and the limit point of the sequence generated by the method  lies in the relative interior of the feasible set, then the algorithm achieves a linear convergence rate. 
Next, we focus on a special class of problems where the feasible constraint set is $(\alpha,q)$-uniformly convex for some $\alpha >0$ and $q \geq 2$, including, in particular, \(\ell_p\)-balls for all $p>1$. In this context, we prove that the method attains:  (i) a  rate of $\mathcal{O}(1/k^\frac{q}{q-1})$ when the objective function is strongly convex; and  (ii) a linear rate (if $q=2$) or a rate of $\mathcal{O}(1/k^{\frac{q}{q-2}})$ (if $q>2$) under an additional assumption, which always holds if the feasible set does not contain an unconstrained weak Pareto point. We also discuss enhanced convergence rates for the algorithm in terms of an optimality measure. Finally, we provide some simple examples to illustrate the convergence rates and the set of assumptions.
\\[2mm]
{\bf keywords:} { Frank-Wolfe method; conditional gradient method;  convergence rate;  Pareto optimality; constrained multiobjective problem. }
\\[2mm]
{\bf AMS subject classifications:} {49M05, 58E17, 65K05, 90C29}
\end{abstract}

\section{Introduction}\label{pre}
In this paper, we are interested in the constrained convex multiobjective optimization problem
\begin{equation} \label{vectorproblem}
\min_{x \in \mathcal{X}} F(x) = (F_1(x), \ldots, F_m(x)),
\end{equation}
where the constraint set $\mathcal{X}$ is a nonempty, convex, and compact subset of a finite dimensional vector space $\mathcal{Y}$ equipped with an inner product $\langle \cdot,\cdot \rangle$  and a norm $\|\cdot\|$. 

Many interesting problems in  applications  of engineering, economics, management science, and medicine can be formulated as \eqref{vectorproblem}; 
see, for example, \cite{Schreibmann,Eschenauer1990}. This motivates many researches in the last decades to develop algorithms for solving general multiobjective and vector optimization problems, many of them by extending algorithms from scalar ($m=1$) optimization. Specifically, \cite{mauricio&iusem} proposed and analyzed a projected gradient method for solving constrained vector optimization problems which are more general than \eqref{vectorproblem}. The convergence of this method was further explored in \cite{luis-jef-yun,fukuda2011convergence}. This method extends its unconstrained version proposed in  \cite{benar&fliege}. By adding the indicator function of the constraint set $\X$ to the objective function $F$, one can also apply multiobjective/vector proximal point methods such as the one introduced in \cite{bonnel2005proximal} and further developed in \cite{Glaydston1,iusem2021,fukudaProxGradMultiobjective,Ray1,acceProxGradMultiobjective,bento18b}. 
Newton-like methods \cite{bento18c} and other methods on manifolds \cite{bento12, bento13, bento13b, bento18} were also  proposed for multicriteria and vector optimization.

Recently, \cite{Leandro2022} introduced a Frank-Wolfe (conditional gradient) method for solving constrained multiobjective optimization problems. 
The study delved into various convergence properties of the method and showcased its numerical performance. 
Furthermore, \cite{pedro2023,ellen2024} explored extensions of the multiobjective Frank–Wolfe (M-FW) method  for solving composite multiobjective optimization problems, while \cite{chen2023} addressed its application to vector optimization problems. 
A multiobjective version of the away-step Frank-Wolfe method was also recently proposed and analyzed  in \cite{M-AFW}  for solving an instance of \eqref{vectorproblem}, where the constraint set $\X$ is a polyhedron. 

In this study our focus is on improved convergence rates that can be achieved by M-FW for solving convex constrained multiobjective optimization problems with special structure. We assume that the multiobjective function $F$ satisfies the following conditions:\\
{\bf(A1)} For every $j\in \J:=\{1,2,\ldots,m\}$, the component  $F_j\colon V\subset \mathcal{Y} \rightarrow \mathbb{R}^m$ is convex, continuously differentiable, and $L_j$-smooth over~$\X$, i.e., 
\begin{equation}\label{ineqs:convgrad-lipschitz}
F_j(x)+\inner{\nabla F_j(x)}{y-x} \leq F_j(y) \leq F_j(x)+\inner{\nabla F_j(x)}{y-x}+\frac{L_j}{2}\|y-x\|^2, \quad \forall x,y\in \X,
\end{equation}
where $V$ is an open set containing  $\X$.
  
As is well-known,  the convergence rate of the scalar (i.e., $m=1$) Frank-Wolfe (FW) method is $\mathcal{O}(1/k)$, in terms of the sequence of functional
values. Generally, this rate cannot be improved, even with the sole additional assumption that the objective function is strongly convex; see, for example,  \cite{jaggi} and \cite[Section~4.1]{M-AFW} for further details.
However, an enhanced  convergence rate  for the Frank-Wolfe  algorithm in the scalar context has been established under one of the following assumptions, in addition to  {\bf(A1)}: \\ 
(i) The function $F$ is strongly convex on $\X$, and the optimal solution $x^*$ of \eqref{vectorproblem} is in the relative interior of $\X$; see,  for example, \cite{guelat1986some,survey2022};\\
(ii) The set  $\X$ is $(\alpha,q)$-uniformly convex  for some   $\alpha> 0$ and $q\geq2$, and { $\inf_{x\in \X}\|\nabla F(x)\|_*>0$ } (this is equivalent to requiring that  $\X$ does not  contain any unconstrained  minimum  point of $F$); see,  for example, \cite{kerdreux2020,survey2022,garber2015faster};\\
(iii) The set $\X$ is a  $(\alpha,q)$-uniformly convex for some   $\alpha> 0$ and $q\geq2$, and $F$ is strongly convex on $\X$; see,  for example, \cite{kerdreux2020,survey2022,garber2015faster}.

As noted in  \cite[Remark~3]{Leandro2022}, the M-FW   algorithm maintains, under {\bf(A1)},  a convergence rate of $\mathcal{O}(1/k)$. 
The same order of convergence rate was also obtained in \cite{pedro2023,ellen2024}.

The main goal of this paper is to explore, in the multiobjective setting, whether a faster convergence rate can be achieved for the Frank-Wolfe algorithm  under natural generalizations of  assumptions (i), (ii), and (iii) above.  In terms of the objective function values,
 we initially show  that the algorithm achieves a linear convergence rate when the objective function is strongly convex and the limit point of the sequence generated by the method lies in the relative interior of the feasible set.
Next, we turn our attention to a specific class of problems characterized by a feasible constraint set that is $(\alpha,q)$-uniformly convex for some $\alpha > 0$ and $q \geq 2$, notably including \(\ell_p\)-balls for all $p>1$. 
In this scenario, we prove that the method attains:  (i) a  rate of $\mathcal{O}(1/k^\frac{q}{q-1})$ when the objective function is strongly convex; and  (ii) a linear rate (if $q=2$) or a rate of $\mathcal{O}(1/k^{\frac{q}{q-2}})$ (if $q>2$) under an additional assumption, which always holds if the feasible set does not contain an unconstrained weak Pareto point; see Remark~\ref{rem:thetatildexk}. The assumptions and convergence rate results are summarized  in Table \ref{tab:minha_tabela}. We also discuss enhanced convergence rates for M-FW in terms of an optimality measure and present some illustrative examples to showcase our theoretical findings.


\begin{table}[h!]
{\footnotesize
\centering
\begin{tabular}{|c|c|c|c|>{\arraybackslash}m{3.7cm}|} 
\hline
Theorem & Feasible set $\X$ &  Function $F$& limit point  $x^*$ & Conv. rate  \\ \hline
 \cite[Remark~3]{Leandro2022} & convex & convex & --& $\mathcal{O}(1/k)$\\ \hline
 Theorem~\ref{theo:interior} & convex & strongly convex & lies in $\text{relint}(\X)$&  linear  \\ \hline
  Theorem~\ref{uniformly convex2}&  $(\alpha,q)$-uniformly convex  & strongly convex & -- & $\mathcal{O}\left(1/k^\frac{q}{q-1}\right)$ \\ \hline 
 Theorem~\ref{theorem:Xunifconv}&  $(\alpha,q)$-uniformly convex  & $0< \inf_{_{x\in \X}} |\tilde\theta(x)|$& -- & $  \begin{cases}
     \text{linear }&  \text{if } q=2, \\
        \mathcal{O}\left(1/k^{\frac{q}{q-2}}\right) & \text{if } q> 2
    \end{cases} $\\ \hline
\end{tabular}
\\[2mm]
Convergence rates for the multiobjective Frank-Wolfe algorithm under different assumptions. In this table,  $x^*$ is the limit point  of the sequence generated by the algorithm, $\text{relint}(\X)$ denotes the relative interior of $\X$, and $\tilde\theta(x):=\min_{\|z\|\leq 1}\max_{{j \in \J}}\inner{\nabla F_j(x)}{z}$. Since {\bf (A1)} is a common assumption across all results, it is not included in the third column for brevity.
\label{tab:minha_tabela}}
\end{table}


The paper is organized as follows: Section~\ref{sec:prelim}  contains some notations,  definitions and basic results.  Section~\ref{sec:M-FW} formally describes the M-FW algorithm to solve \eqref{vectorproblem} and presents some  basic properties about its generated sequence.  Section~\ref{sec:main} establishes faster convergence rates  for the M-FW algorithm under different assumptions. 
 Section \ref{sec:experiments} contains some illustrative examples.  
 Final remarks are given in Section~\ref{sec:final}.

\section{Preliminary material}\label{sec:prelim}

In this section, we introduce some notations and definitions  which will be used throughout this paper.  Some basic properties on the  multiobjective Frank-Wolfe direction are also discussed.

Let $\J:=\{1,\ldots,m\}$ and $L:=\max_{j \in \J}L_j$. Denote by $B(x,\epsilon) = \{y \in \Rn \ : \ \| x - y \| \leq \epsilon \}$ the ball centered at $x \in \Rn$ with radius $\epsilon > 0$. The diameter of $\X$,  the affine hull of $\X$, and the relative interior of $\X$ are denoted, respectively, by 
\[D_{\X}:=\max\{\|x-y\|: x, y \in \X\}, \quad 
\text{aff}(\X) = \left\{ \sum_{i=1}^n \lambda_i x_i : x_i \in \X, \lambda_i \in \mathbb{R}, \sum_{i=1}^n \lambda_i = 1 \right\}
\] 
and
\[ 
\text{relint}(\X) = \{ x \in \X : \text{there exists } \epsilon > 0 \text{ such that } B(x, \epsilon) \cap \text{aff}(\X) \subseteq \X \}.
\]

 
Given  a pair $(\alpha, q) \in \R^2$ such that $\alpha> 0$ and $q\geq2$, the set $\X$ is $(\alpha,q)$-uniformly convex
if and only if for any $x, y \in \X$, $\gamma\in [0,1]$, and $z\in \R^n$ with $\|z\|\leq 1 $, the following inclusion holds
\begin{equation}\label{def:alpha-q unifconv}
  x+\gamma(y-x)+\gamma(1-\gamma)\frac{\alpha}{2}\|y-x\|^qz\in \X.  
\end{equation}

Uniformly convex sets encompass \(\ell_p\)-balls, which are \(\left({p - 1}, 2\right)\)-uniformly convex for \(1 < p \leq 2\) and \(\left({2}/{p}, p\right)\)-uniformly convex for \(p > 2\), in the context of the \(\|\cdot\|_p\) norm. An \(\left({\alpha}, 2\right)\)-uniformly convex set is also called  \(\alpha\)-strongly convex set. Notably, for \(1 < p \leq 2\), the \(\ell_p\)-ball is \(2(p - 1)\)-strongly convex with respect to the \(\|\cdot\|_p\) norm. We refer the reader to  \cite{kerdreux2020, survey2022} for more details about the concept of $(\alpha,q)$-uniformly convex.

In the multiobjective optimization setting, the concept of optimality is  replaced by the concept of {\it Pareto-optimality} or {\it efficiency}. A point $x^{\ast} \in \X$ is called {\it Pareto optimal} or {\it efficient} if and only if there is no $x \in \X$ such that $ F(x) \leq F(x^{\ast})$ and $F(x)\neq F(x^{\ast})$, where the 
 partial order  $\leq$ in $ \R^m $ is defined componentwise, i.e., $z\leq  y$   if and only if  
$z_j\leq y_j\;$, for all $j\in \J$. Similarly, we have $z<y$   if and only if 
$z_j< y_j\;$, for all $j\in \J$.   A point $x^{\ast} \in \X$ is called {\it weak Pareto optimal} or {\it weakly efficient} if and only if there is no $x \in \X$ such that $ F(x) < F(x^{\ast})$.  When the objectives and the feasible set are convex, a necessary and sufficient condition for a point  $x^* \in \X$  be weak Pareto optimal for \eqref{vectorproblem}  is   that $x^{\ast}$ is {\it Pareto critical} ({\it stationary}), i.e., 
\begin{equation}\label{critical1}
 \max_{j \in \J}\inner{\nabla F_j(x^{\ast})}{y-x^{\ast}} \geq 0, \quad \forall y \in \X.
\end{equation}

In the following, we recall a key result that will be used to establish the convergence rate of the M-FW algorithm when the constraint set $\X$ is $(\alpha,q)-$uniformly convex.
\begin{lemma}[Lemma 2.21 of \cite{survey2022}]\label{lemmarecur} Let  $(\hat h_k)_{k \in \N}$  be a sequence of positive numbers and $c_0$, $c_1$, $c_2$, $\beta$ be positive numbers with $c_1 < 1$ such that $\hat h_1 \leq c_0$ and 
$  \hat h_{k+1} \leq [1-\min\{c_1, c_2 \hat h_k^\beta\}] \hat h_k $ for $k \geq 1$, then
\begin{align*}
    \hat h_k &\leq
    \begin{cases}
        c_0(1 - c_1)^{k-1} & \text{for } 1 \leq k \leq k_0, \\
        \left( \frac{(c_1/c_2)}{1+c_1\beta(k-k_0)} \right)^{\frac{1}{\beta}}=\mathcal{O}(1/k^{1/\beta}) & \text{for } k \geq k_0,
    \end{cases}
\end{align*}
where
\[
k_0 := \max\left\{ \left\lfloor \log_{1-c_1} \left( \frac{(c_1/c_2)^{\frac{1}{\beta}}}{c_0} \right) \right\rfloor + 2, 1 \right\}.
\]
\end{lemma}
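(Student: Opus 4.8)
The plan is to split the analysis at the threshold $\bar h := (c_1/c_2)^{1/\beta}$, the value at which the two quantities inside the minimum coincide. First I would record two elementary facts about the recursion. Since $0 \le \min\{c_1, c_2 \hat h_k^\beta\} \le c_1 < 1$, the bracket lies in $[1-c_1,1]$, so $(\hat h_k)$ is nonincreasing; and whenever $\hat h_k \ge \bar h$ one has $c_2\hat h_k^\beta \ge c_1$, so the minimum equals $c_1$ and the recursion becomes the geometric contraction $\hat h_{k+1} \le (1-c_1)\hat h_k$.

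\emph{Phase 1 (geometric regime).} As long as $\hat h_j \ge \bar h$ for $j=1,\dots,k-1$, iterating the contraction from $\hat h_1 \le c_0$ gives $\hat h_k \le c_0(1-c_1)^{k-1}$. I would then argue that the sequence must drop below $\bar h$ no later than index $k_0$: if $\hat h_{k-1} \ge \bar h$, then combining this with $\hat h_{k-1} \le c_0(1-c_1)^{k-2}$ yields $(1-c_1)^{k-2} \ge \bar h/c_0 = (c_1/c_2)^{1/\beta}/c_0$; applying $\log_{1-c_1}$, which is decreasing because the base is below $1$, reverses the inequality and bounds $k-2 \le \log_{1-c_1}\!\big((c_1/c_2)^{1/\beta}/c_0\big)$. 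This is exactly the computation behind the definition of $k_0$, and it forces the first crossing index $k^*$ to satisfy $k^* \le k_0$, which establishes the geometric branch.

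\emph{Phase 2 (sublinear regime).} Once $\hat h_k \le \bar h$, the contraction keeps it there ($\hat h_{k+1} \le \hat h_k \le \bar h$), so from $k_0$ onward the minimum equals $c_2\hat h_k^\beta$ and $\hat h_{k+1} \le \hat h_k(1-c_2\hat h_k^\beta)$. The clean way to extract the rate is to pass to $v_k := \hat h_k^{-\beta}$: raising the recursion to the power $-\beta$ and using the convexity inequality $(1-x)^{-\beta} \ge 1+\beta x$ with $x = c_2\hat h_k^\beta \in [0,c_1]\subset[0,1)$ gives $v_{k+1} \ge v_k(1+\beta c_2\hat h_k^\beta) = v_k + \beta c_2$. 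Summing this arithmetic lower bound from $k_0$, together with $v_{k_0}=\hat h_{k_0}^{-\beta} \ge \bar h^{-\beta} = c_2/c_1$, yields $v_k \ge (c_2/c_1)\big(1+c_1\beta(k-k_0)\big)$, which inverts to the claimed bound $\hat h_k \le \big((c_1/c_2)/(1+c_1\beta(k-k_0))\big)^{1/\beta}$.

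I expect the main obstacle to be bookkeeping the transition between the two regimes cleanly, in particular checking that $\bar h^{-\beta} = c_2/c_1$ is precisely the value that makes the sublinear bound coincide with the threshold at $k=k_0$ (so the two branches match there), and verifying that the floor and additive constant in the definition of $k_0$ are consistent with the logarithmic estimate of the crossing index. The substitution $v_k=\hat h_k^{-\beta}$ together with the Bernoulli/convexity inequality are the decisive technical devices; everything else is routine.
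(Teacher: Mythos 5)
Since the paper never proves this lemma---it is imported verbatim from \cite{survey2022} (Lemma 2.21)---there is no in-paper proof to compare against, and your argument must stand on its own. Its core is sound. The monotonicity of $(\hat h_k)_{k\in\N}$, the role of the threshold $\bar h=(c_1/c_2)^{1/\beta}$, and the crossing estimate are all correct: writing $L:=\log_{1-c_1}(\bar h/c_0)$, your inequality $k^*\le L+2$ for the first crossing index $k^*$ combines with the integrality of $k^*$ and $L<\lfloor L\rfloor+1$ to give $k^*\le k_0$, hence $\hat h_{k_0}\le\bar h$. Phase 2 is then complete and correct: from $\hat h_k\le\bar h$ for $k\ge k_0$, the substitution $v_k=\hat h_k^{-\beta}$ and the Bernoulli-type bound $(1-x)^{-\beta}\ge 1+\beta x$ on $[0,1)$ give $v_{k+1}\ge v_k+\beta c_2$, and summing from $v_{k_0}\ge\bar h^{-\beta}=c_2/c_1$ yields exactly the second branch. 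This is the branch the paper actually relies on (Theorems \ref{uniformly convex2} and \ref{theorem:Xunifconv} invoke the lemma with $c_1=1/2$), and your proof of it is complete.

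The gap is the sentence ``which establishes the geometric branch.'' Phase 1 proves $\hat h_k\le c_0(1-c_1)^{k-1}$ only for $k$ up to the crossing index $k^*$; it says nothing for $k^*<k\le k_0$. For $k\le k_0-1$ this is repairable: there $c_0(1-c_1)^{k-1}\ge c_0(1-c_1)^{k_0-2}\ge\bar h$ (because $k_0-2=\lfloor L\rfloor\le L$), so after the crossing the bound holds trivially since $\hat h_k<\bar h$. But at $k=k_0$ the claimed inequality can genuinely fail, so no bookkeeping closes this gap; the statement as transcribed is too strong. Counterexample: $c_1=0.9$, $c_2=1$, $\beta=1$, $c_0=10$, so $\bar h=0.9$ and $k_0=\lfloor\log_{0.1}(0.09)\rfloor+2=3$; the positive sequence $\hat h_1=5$, $\hat h_2=0.5$, $\hat h_3=0.25$, continued by $\hat h_{k+1}=\hat h_k(1-\min\{0.9,\hat h_k\})$ for $k\ge 3$, satisfies all hypotheses (the first two steps hold with equality), yet $\hat h_3=0.25>0.1=c_0(1-c_1)^{2}$. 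The correct statement asserts the geometric branch only for $1\le k\le k_0-1$ and, at $k=k_0$, records only $\hat h_{k_0}\le(c_1/c_2)^{1/\beta}$, which is the second branch evaluated at $k=k_0$. (The failure requires $c_1>1/(1+\beta)$, which makes the map $h\mapsto h(1-c_2h^\beta)$ non-monotone below $\bar h$; in the paper's applications $c_1=1/2$ and $\beta<1$, and there your Phase 1 together with the patch above does give the first branch for all $k\le k_0$, so the downstream theorems are unaffected.) In short: your Phase 2 and crossing estimate are right, but the first branch needs the restriction $k\le k_0-1$ rather than the claim that it is established up to $k_0$.
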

  \section{Multiobjective Frank-Wolfe algorithm and basic properties}\label{sec:M-FW}
In this section, we state the multiobjective Frank-Wolfe algorithm and present some of its basic properties, which will be  useful to analyze its convergence rates.

We begin by stating the algorithm and recalling some properties regarding  the optimal value  of its subproblem.

\mgap
\mgap
\noindent
\fbox{
\begin{minipage}[h]{6.4 in}
{\bf Multiobjective Frank-Wolfe (M-FW) algorithm.} 
\begin{itemize}
\item[(0)]  Let $x^0 \in \X$ be given,  and set $L:=\max_{j \in \J}L_j$ and   $k \leftarrow 0$.

\item[(1)] Compute 
\begin{equation}\label{probgrad1}
p^{FW}(x^k)\in \argmin_{y\in \X} \max_{j \in \J }\inner{\nabla F_j(x^k)}{y-x^k}
\end{equation}
and let $d^{FW}(x^k):=p^{FW}(x^k)-x^k$ and  $  \theta^{FW}(x^k):=\max_{j \in \J }\inner{\nabla F_j(x^k)}{d^{FW}(x^k)}$.

\item [(2)] If  $  \theta^{FW}(x^k)=0$, then stop and return $x^k$.
       
\item[(3)] Compute the stepsize $\gamma_k \in (0,1]$ defined as
\begin{equation}\label{def:gamma-k}
     \gamma_k:=\min\left\{1, \frac{-\theta^{FW}(x^k)}{L\|d^{FW}(x^k)\|^2}\right\},
\end{equation}
and set $x^{k+1}:=x^k+\gamma_kd^{FW}(x^k)$ and $k\leftarrow k+1,$ and go to step{ (1)}.
\end{itemize}
\noindent
{\bf end}
\end{minipage}
}
\vgap
\vgap

\begin{remark}\label{rem:M-FW alg}
(i) Since $\X$ is a compact set, it follows that \eqref{probgrad1} has always a solution. Note that problem~\eqref{probgrad1} can be reformulated as
\begin{equation} \label{vproblem}
 \begin{array}{cl}
\ds\min   &  t          \\
\mbox{s. t.} & \inner{\nabla F_j(x)}{y-x} \leq t, \quad \forall j=1,\ldots,m,\\
&t \in \R, \quad y \in \X. 
\end{array}
\end{equation}
which is a constrained convex optimization problem. When $\X$ is a polytope, it becomes a linear programming problem.  (ii) The optimum value function $\theta^{FW}(\cdot)$  has interesting properties and, in particular, it can  characterize stationary points of \eqref{vectorproblem}. Indeed, the following properties hold, see \cite{Leandro2022} for their proofs: (a) $\theta^{FW}(x)\leq0$ for all $x\in \X$; (b) $\theta^{FW}(\cdot)$ is a continuous function on $\X$; (c) $\theta^{FW}(x)=0$ iff  $x\in \X$ is a stationary point.
(iii) It is immediate to see that  if the M-FW algorithm does not stop in step~(2), then  $\theta^{FW}(x^k)< 0$ and, in particular, 
$d^{FW}(x^k)\neq 0$. Therefore, $\gamma_k$ is well-defined and belongs to $(0,1]$. Moreover, as $x^0\in \X$, it follows from the definitions of $d^{FW}(x^k)$ and $x^{k+1}$, and the convexity of $\X$,  that the whole sequence $(x^k)_{k\in \N}$ is contained in $\X$. 
\end{remark}

In the following, we present some  results that will be useful to establish the convergence rates of the M-FW algorithm. The first result shows the progress, in terms of objective value, between two consecutive iterates generated by the algorithm.

\begin{lemma}\label{seq1:ck} 
Let $(x^k)_{k \in \N}$  be the sequence generated by  M-FW algorithm. Then,  for every $j\in \J$, 
it holds:
\begin{equation}\label{key1}
F_j(x^{k+1}) \leq F_j(x^{k})+\frac{\theta^{FW}(x^k)}2\gamma_k , \quad \forall  k \in \N.
\end{equation}
\end{lemma}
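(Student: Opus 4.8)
The plan is to combine the quadratic upper bound furnished by the $L_j$-smoothness in {\bf(A1)} with the precise choice of stepsize in \eqref{def:gamma-k}. First I would fix $j \in \J$ and $k \in \N$ and apply the right-hand inequality of \eqref{ineqs:convgrad-lipschitz} at $x = x^k$ and $y = x^{k+1}$. Since the algorithm sets $x^{k+1} - x^k = \gamma_k d^{FW}(x^k)$, this gives
\[
F_j(x^{k+1}) \leq F_j(x^k) + \gamma_k\inner{\nabla F_j(x^k)}{d^{FW}(x^k)} + \frac{L_j}{2}\gamma_k^2\|d^{FW}(x^k)\|^2 .
\]

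Next I would bound the two error terms. By the definition of $\theta^{FW}(x^k)$ as the maximum over $j \in \J$, we have $\inner{\nabla F_j(x^k)}{d^{FW}(x^k)} \leq \theta^{FW}(x^k)$ for every $j$, and $L_j \leq L$ by definition of $L$; substituting both yields
\[
F_j(x^{k+1}) \leq F_j(x^k) + \gamma_k\theta^{FW}(x^k) + \frac{L}{2}\gamma_k^2\|d^{FW}(x^k)\|^2 .
\]
It then suffices to prove the single inequality $L\gamma_k\|d^{FW}(x^k)\|^2 \leq -\theta^{FW}(x^k)$: multiplying it by $\gamma_k/2 > 0$ and rearranging shows that the last two terms above are together bounded by $\tfrac{1}{2}\gamma_k\theta^{FW}(x^k)$, which is exactly the claim \eqref{key1}.

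The crux — and really the only nonroutine point — is this last inequality, which I would settle by a short case analysis on the stepsize formula \eqref{def:gamma-k}. If the minimum defining $\gamma_k$ is attained by its second argument, i.e. $\gamma_k = -\theta^{FW}(x^k)/(L\|d^{FW}(x^k)\|^2)$, then $L\gamma_k\|d^{FW}(x^k)\|^2 = -\theta^{FW}(x^k)$ and equality holds. Otherwise $\gamma_k = 1$, which by the definition of the minimum forces $L\|d^{FW}(x^k)\|^2 \leq -\theta^{FW}(x^k)$, and the inequality again follows. Throughout, Remark~\ref{rem:M-FW alg}(iii) guarantees that in the nonterminating case $\theta^{FW}(x^k) < 0$ and $d^{FW}(x^k) \neq 0$, so that $\gamma_k$ is well defined and the division above is legitimate; in the terminating case $\theta^{FW}(x^k) = 0$ the bound \eqref{key1} holds trivially. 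This completes the argument.
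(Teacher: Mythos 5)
Your proposal is correct and follows essentially the same route as the paper's proof: the $L_j$-smoothness bound, replacing $\inner{\nabla F_j(x^k)}{d^{FW}(x^k)}$ by $\theta^{FW}(x^k)$ and $L_j$ by $L$, and then a case analysis on which branch of the minimum in \eqref{def:gamma-k} defines $\gamma_k$. The only (cosmetic) difference is that you unify the two cases through the single inequality $L\gamma_k\|d^{FW}(x^k)\|^2 \leq -\theta^{FW}(x^k)$, whereas the paper substitutes the stepsize directly into the quadratic bound in each case; both arguments are the same in substance.
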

\begin{proof}  
Since  $F_j$ is   $L_j$-smooth  on $\X$  and $L=\max_{j  \in \J}L_j$, it follows from the second inequality in \eqref{ineqs:convgrad-lipschitz} that, for all $k \in \N$ and $j \in \J$,
\begin{align}
F_j(x^{k+1})=F_j(x^k+\gamma_k d^{FW}(x^k)) \nonumber
&\leq F_j(x^{k})+\gamma_k \inner{\nabla F_j(x^k)}{d^{FW}(x^k)}+\frac {L_j}2\gamma_k^2\|d^{FW}(x^k)\|^2\\  \nonumber
&\leq F_j(x^{k})+\gamma_k \max_{j \in \J}\inner{\nabla F_j(x^k)}{d^{FW}(x^k)}+\frac {L}2\gamma_k^2\|d^{FW}(x^k)\|^2\\\label{er56}
& = F_j(x^{k})+\gamma_k \theta^{FW}(x^k)+\frac {L}2\gamma_k^2\|d^{FW}(x^k)\|^2,
\end{align}
where the last equality is due to the definition of $ \theta^{FW}(x^k)$ in step~1 of the M-FW algorithm. If $\gamma_k=-\theta^{FW}(x^k)/(L\|d^{FW}(x^k)\|^2)$, it follows from the last inequality that
\begin{align}\label{key12345}
F_j(x^{k+1}) \leq F_j(x^{k})-\frac{\theta^{FW}(x^k)^2}{2L\|d^{FW}(x^k)\|^2},  \quad \forall  k \in \N, j\in \J.
\end{align} 
Now, if  $\gamma_k=1$,  then $L\|d^{FW}(x^k)\|^2\leq -\theta^{FW}(x^k)$. Hence, from \eqref{er56}, we obtain
\begin{align*}
F_j(x^{k+1}) \leq F_j(x^{k})+\frac{\theta^{FW}(x^k)}{2},  \quad \forall k \in \N,  j\in \J.
\end{align*}
 Therefore,  \eqref{key1} follows  from the last inequality, \eqref{key12345} and the definition of $\gamma_k$  in~\eqref{def:gamma-k}.
\end{proof}

It follows from step~(2) of the M-FW algorithm and statement (c) in Remark~\ref{rem:M-FW alg}(ii) that the algorithm stops at some iteration~$k$ if and only if $x^k$ is a weak Pareto point for problem \eqref{vectorproblem}. Hence, from now on, we will assume that the M-FW algorithm generates an infinite sequence $(x^k)_{k\in \N}$.  Since $F$ is a convex function,  we have that every  limit point $x^*$ of $(x^k)_{k \in \N}$  is a  weak Pareto point  of \eqref{vectorproblem};  {see \cite[Remark~2]{Leandro2022}}. Moreover, since $\theta^{FW}(x^k)< 0$,  it follows from \eqref{key1} that the sequence $(F_j(x^k))_{k\in \N}$ is monotonically decreasing for all $j\in \J$ and bounded, because $F_j$ is continuous and $\X$ is compact, hence, it converges to some $F^*_j$. Thus, for every $j\in \J$, we have $F_j(x^*)=F^*_j$ for all limit point $x^*$ of $(x^k)_{k \in \N}$.
{Furthermore, under the additional assumption that $F_j$ is $\mu_j$-strongly convex for all $j\in \J$,  
the whole sequence $(x^k)_{k \in \N}$ converges to $x^*$, see the proof of this fact immediately after assumption ${\bf (A2)}$.} 
Therefore, henceforth we fix a limit point $x^*$ of $(x^k)_{k \in \N}$ and define the following elements:
\begin{equation}\label{defhj}
 e^k:=\frac{x^*-x^k}{\|x^*-x^k\|}, \qquad  h_j(x^{k}):=F_j(x^{k})-F_j(x^*),  \qquad   \hat h(x^{k}):= \min_{{j \in \J }} h_j(x^{k}).
\end{equation}

It is easy to see that the sequence $(\hat h(x^k))_{k\in \N}$ converges to zero. Our goal will be to measure how fast this convergence is, under different set of assumptions on the objective function $F$ and the constraint set $\X$. To this end, it will be interesting to give an upper bound on the quantity $\hat h(x^1)$.  The next result shows that, in fact, an upper bound can be universally established for the whole sequence $(\hat h(x^{k+1}))_{k\in \N}$ in terms of the constant of smoothness of $F$, $L=\max_{j\in \J}{L_j}$, and the diameter of the constraint set $\X$.

\begin{lemma} Let $(x^k)_{k \in \N}$  be the sequence generated by the M-FW algorithm. The following inequality holds:
\begin{equation}\label{boundh}
 \hat h(x^{k+1}) \leq \frac {LD_{\X}^2}2, \quad \forall k\in \N,
\end{equation}
where $L:=\max_{j\in \J} L_j$ and $D_\X$ is the diameter of the constraint set $\X$.
\end{lemma}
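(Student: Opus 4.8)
The plan is to collapse the $m$ componentwise descent estimates into a single scalar Frank--Wolfe recursion for $\hat h(x^k)=\min_{j\in\J}h_j(x^k)$, and then read off the bound from the explicit stepsize \eqref{def:gamma-k}. The engine of the argument is the inequality
\[
\theta^{FW}(x^k)\le -\hat h(x^k),
\]
together with $\hat h(x^k)\ge0$. The latter is already available: the text notes that each $(F_j(x^k))_{k\in\N}$ decreases to $F_j(x^*)$, so $h_j(x^k)\ge0$ for all $j$ and hence $\hat h(x^k)\ge0$. For the former I would insert the feasible point $y=x^*$ into the minimization problem \eqref{probgrad1} defining $\theta^{FW}(x^k)$, which gives $\theta^{FW}(x^k)\le\max_{j\in\J}\inner{\nabla F_j(x^k)}{x^*-x^k}$, and then apply the convexity (left-hand) inequality in \eqref{ineqs:convgrad-lipschitz} coordinatewise, $\inner{\nabla F_j(x^k)}{x^*-x^k}\le F_j(x^*)-F_j(x^k)=-h_j(x^k)$. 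Taking the maximum over $j$ converts the right-hand side into $-\min_{j\in\J}h_j(x^k)=-\hat h(x^k)$. Matching the $\max$ hidden in $\theta^{FW}$ with the $\min$ defining $\hat h$ is the one genuinely multiobjective step, and it is where I expect the main difficulty to lie.

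With this in hand, I would pass from the componentwise estimate to a scalar recursion. Subtracting $F_j(x^*)$ from the smoothness bound \eqref{er56} gives, for every $j\in\J$,
\[
h_j(x^{k+1})\le h_j(x^k)+\gamma_k\theta^{FW}(x^k)+\frac{L}{2}\gamma_k^2\|d^{FW}(x^k)\|^2 .
\]
Because only the term $h_j(x^k)$ depends on $j$, and because $a_j\le b_j$ for all $j$ forces $\min_j a_j\le\min_j b_j$, taking the minimum over $j$ yields
\[
\hat h(x^{k+1})\le \hat h(x^k)+\gamma_k\theta^{FW}(x^k)+\frac{L}{2}\gamma_k^2\|d^{FW}(x^k)\|^2 .
\]
Using $\theta^{FW}(x^k)\le-\hat h(x^k)$ and $\gamma_k>0$ turns this into the familiar scalar Frank--Wolfe inequality $\hat h(x^{k+1})\le(1-\gamma_k)\hat h(x^k)+\tfrac{L}{2}\gamma_k^2\|d^{FW}(x^k)\|^2$.

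It then remains to feed in the stepsize \eqref{def:gamma-k} and bound $\|d^{FW}(x^k)\|\le D_{\X}$, valid since $p^{FW}(x^k),x^k\in\X$. If $\gamma_k=1$, the factor $1-\gamma_k$ vanishes and the recursion reduces at once to $\hat h(x^{k+1})\le\tfrac{L}{2}\|d^{FW}(x^k)\|^2\le\tfrac{L}{2}D_\X^2$. If instead $\gamma_k=-\theta^{FW}(x^k)/(L\|d^{FW}(x^k)\|^2)<1$, I would use \eqref{key12345} in the form $\hat h(x^{k+1})\le\hat h(x^k)-\theta^{FW}(x^k)^2/(2L\|d^{FW}(x^k)\|^2)$, replace $\hat h(x^k)$ by the larger quantity $-\theta^{FW}(x^k)$, and finish with the elementary identity $v-\tfrac{v^2}{2La}=\tfrac{La}{2}-\tfrac{1}{2La}(v-La)^2\le\tfrac{La}{2}$ applied with $v=-\theta^{FW}(x^k)$ and $a=\|d^{FW}(x^k)\|^2$; this again gives the bound $\tfrac{L}{2}\|d^{FW}(x^k)\|^2\le\tfrac{L}{2}D_\X^2$. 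Since both cases produce \eqref{boundh} for every $k$, no separate monotonicity argument is needed, and the proof is complete.
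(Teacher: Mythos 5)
Your proposal is correct and follows essentially the same route as the paper: both collapse the componentwise descent estimate \eqref{er56} into a recursion for $\hat h$, both rely on the key inequality $\theta^{FW}(x^k)\le-\hat h(x^k)$ obtained by inserting $y=x^*$ into \eqref{probgrad1} and using convexity, and both finish with $\|d^{FW}(x^k)\|\le D_{\X}$. The only cosmetic difference is that the paper handles both stepsize cases at once by noting $q(\gamma_k)\le q(1)$ for the quadratic $q(\gamma)=\gamma\,\theta^{FW}(x^k)+\tfrac{L}{2}\gamma^2\|d^{FW}(x^k)\|^2$ minimized over $[0,1]$, whereas your case split with the completion-of-squares identity is an algebraically equivalent expansion of that same observation.
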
 
\begin{proof}
It follows from \eqref{er56} and  \eqref{defhj}, for all  $k \in \N$, that 
\begin{align} \nonumber
\hat h(x^{k+1}) & \leq \hat h(x^{k})+ \gamma_k \theta^{FW}(x^k)+\frac {L}2\gamma_k^2\|d^{FW}(x^k)\|^2\\
&  \leq \hat h(x^{k})+  \theta^{FW}(x^k)+\frac {L}2\|d^{FW}(x^k)\|^2, \label{eq:450}
\end{align}
where the last inequality is due to the fact that the stepsize $\gamma_k$ given in \eqref{def:gamma-k}  is the minimizer of the quadratic function $q(\gamma):= \gamma \theta^{FW}(x^k)+\frac {L}2\gamma^2\|d^{FW}(x^k)\|^2$ over the interval $[0,1]$. 
On the other hand, it follows from the definitions of $\theta^{FW}(\cdot)$ and $\hat h(x^k)$ in  step~(1) of the M-FW algorithm and \eqref{defhj}, respectively, and the gradient inequality of the convex function $F_j$ (see the first inequality in \eqref{ineqs:convgrad-lipschitz})   that 
\[\theta^{FW}(x^k)\leq \max_{j \in \J }\inner{\nabla F_j(x^k)}{x^*-x^k} \leq \max_{j\in \J }(F_j(x^*)-F_j(x^k))=  -\min_{j \in \J }h_j(x^k)=-\hat h (x^k),\quad \forall  k \in \N.\]
Therefore, \eqref{boundh} follows by combining the last inequality,  \eqref{eq:450}, and the fact that  $\|d^{FW}(x^k)\|\leq D_{\X}$ for all $k\in \N$.
\end{proof}


The next two  results assume that the objective function of problem~\eqref{vectorproblem} is strongly convex. For convenience, this will be stated in the following assumption. 

{\bf(A2)} the function $F$ is strongly convex, i.e.,  the component $F_j\colon \X \rightarrow \mathbb{R}$ of $F$,  for every  $j \in \J$, satisfies
\begin{equation}\label{ineq-Fj stongly conv}
F_j(y)\geq F_j(x)+\inner{\nabla F_j(x)}{y-x}+\frac{\mu_j}2\|y-x\|^2, \quad x,y\in \X,
\end{equation}
for some  $\mu_j>0$. For simplicity,  we denote $\mu:=\min_{j \in \J}\mu_j$. 

It is worth noting that, under assumption {\bf (A2)}, the whole sequence $(x^k)_{k \in \N}$ converges to the limit point $x^*$.  Indeed, since $F(x^*)\leq F(x^k)$ for all $k\in \N$, it follows from {\bf (A2)} and the definition of $\theta^{FW}(x^k)$ that 
\begin{align*}
0&\geq \max_{j\in \J} (F_j(x^*)-F_j(x^k))\geq \max_{j\in \J} \langle \nabla F_j(x^k),x^*-x^k\rangle + \frac{\min_{j\in \J}\mu_j}{2}\|x^k-x^*\|^2 \\
&\geq \theta^{FW}(x^k)+\frac{\mu}{2}\|x^k-x^*\|^2,
\end{align*}
which implies that 
\begin{equation}\label{ineq:theta-xk}
 |\theta^{FW}(x^k)|\geq   \frac{\mu}{2}\|x^k-x^*\|^2, \quad \forall k\in \N.
\end{equation}
Since $(\theta^{FW}(x^k))_{k\in \N}$ converges to zero {(see \cite[Corollary~14]{Leandro2022}}), we conclude from \eqref{ineq:theta-xk} that  the whole sequence $(x^k)_{k \in \N}$ converges to  $x^*$.

In the following, we show that the decreasing of the sequence $(h_j(x^k))_{k\in \N}$ can be, at iteration~$k$, controlled by the angle between  $\nabla F_j(x^k)$ and the direction $e^k$ defined in \eqref{defhj}. The proof of this property is similar to the one in \cite[Lemma~5]{M-AFW}, established for a special variant of the M-FW algorithm, we have included it here for completeness.

\begin{lemma}\label{seq1:ck22} 
Let $(x^k)_{k \in \N}$  be the generated by the M-FW algorithm and consider $(h_j(x^{k}))_{k\in \N}$ and $(e^k)_{k\in \N}$ as in \eqref{defhj}. If {\bf(A2)} holds, then, for every $j\in \J$, we have 
\begin{equation}\label{key2}
0<  h_j(x^{k}) \leq \frac{\inner{\nabla F_j(x^k)}{ e^k}^2}{2\mu}, \quad \forall  k \in \N.
\end{equation}
\end{lemma}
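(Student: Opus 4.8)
The plan is to prove the two inequalities in~\eqref{key2} separately. For the strict lower bound $0 < h_j(x^k)$, I would invoke Lemma~\ref{seq1:ck}: since the algorithm is assumed to generate an infinite sequence, we have $\theta^{FW}(x^k) < 0$ and $\gamma_k \in (0,1]$ (see Remark~\ref{rem:M-FW alg}(iii)), so \eqref{key1} shows that $(F_j(x^k))_{k \in \N}$ is \emph{strictly} decreasing. As already observed, this sequence converges to $F_j(x^*) = F^*_j$, and a strictly decreasing sequence remains strictly above its limit; hence $F_j(x^k) > F_j(x^*)$, that is, $h_j(x^k) > 0$. A convenient byproduct is that $x^k \neq x^*$ for every $k$, so the direction $e^k$ in~\eqref{defhj} is well-defined.

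For the upper bound, the key tool is the strong convexity inequality~\eqref{ineq-Fj stongly conv}. Applying it with $x = x^k$ and $y = x^*$ and rearranging gives
\[
h_j(x^k) = F_j(x^k) - F_j(x^*) \leq -\inner{\nabla F_j(x^k)}{x^* - x^k} - \frac{\mu_j}{2}\|x^* - x^k\|^2.
\]
I would then use $x^* - x^k = \|x^* - x^k\|\,e^k$ from~\eqref{defhj} to rewrite the right-hand side as a quadratic in the single scalar $b := \|x^* - x^k\| \geq 0$, namely $-b\,\inner{\nabla F_j(x^k)}{e^k} - \tfrac{\mu_j}{2}b^2$. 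Completing the square in $b$ bounds this quadratic by its unconstrained maximum $\inner{\nabla F_j(x^k)}{e^k}^2/(2\mu_j)$, independently of the actual value of $b$. Finally, since $\mu = \min_{j \in \J}\mu_j \leq \mu_j$ implies $1/\mu_j \leq 1/\mu$, we obtain $h_j(x^k) \leq \inner{\nabla F_j(x^k)}{e^k}^2/(2\mu)$, which is exactly~\eqref{key2}.

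This argument is essentially routine, and I do not anticipate a genuine obstacle. The one point requiring care is the completing-the-square step: one must recognize that the precise magnitude $\|x^* - x^k\|$ is irrelevant, since the bound follows by maximizing the quadratic over all admissible lengths $b$. The sole additional subtlety—that $e^k$ is well-defined—is already taken care of by the strict-decrease argument in the first paragraph.
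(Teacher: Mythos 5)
Your proof is correct and takes essentially the same route as the paper's: the upper bound comes from the strong convexity inequality \eqref{ineq-Fj stongly conv} between $x^k$ and $x^*$ followed by completing the square in a scalar variable (your parametrization by $b=\|x^*-x^k\|$ is just the paper's parametrization of the segment by $\gamma$ in disguise), and the resulting constant $1/(2\mu_j)\le 1/(2\mu)$ matches. Your positivity argument via the strict decrease of $(F_j(x^k))_{k\in\N}$ toward its limit $F_j(x^*)$ is also fine, and is in fact a more explicit justification than the paper's brief appeal to $x^k$ not being a weak Pareto point.
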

\begin{proof}  The first inequality in \eqref{key2} follows from the definition of $h_j(x^k)$ in \eqref{defhj}, the fact that $F_j(x^k)\geq F_j(x^*)$, and the assumption  that $x^k$ is not a weak Pareto point for \eqref{vectorproblem}. 
Now, since  $F_j$ is $\mu_j$-strongly convex  and $\mu=\min_{j \in \J}\mu_j$, we have, for all $k \in \N$, $j \in\J$ and $\gamma \in [0,1]$, that
\begin{align*}
 F_j(x^k+\gamma (x^*-x^k)) &\geq F_j(x^{k})+\gamma \inner{\nabla F_j(x^k)}{x^*-x^k}+\frac {\mu}2\gamma^2\|x^*-x^k\|^2=:q(\gamma).
\end{align*}
Using that the unconstrained minimizer  of $q(\gamma)$ is $\gamma^*=-\inner{\nabla F_j(x^k)}{x^*-x^k}/(\mu\|x^*-x^k\|^2)$, we obtain, for all $k \in \N$, $j \in \J$, and $\gamma \in [0,1]$,  that
\begin{align*}
 F_j(x^k+\gamma(x^*-x^k)) &\geq F_j(x^{k})- \frac{\inner{\nabla F_j(x^k)}{x^*-x^k}^2}{2\mu\|x^*-x^k\|^2}.
\end{align*}
Taking $\gamma=1$ in the last inequality and using the definition of $e^k$ in \eqref{defhj}, we have, for all $k \in \N$ and $j\in \J$,
\begin{align*}
 F_j(x^*) &\geq F_j(x^{k})- \frac{\inner{\nabla F_j(x^k)}{ e^k}^2}{2\mu},  
\end{align*}
which, combined with the definition of $h_j(\cdot)$ in \eqref{defhj}, proves  the  second inequality in \eqref{key2}.
\end{proof}

We now prove a recursive inequality for the sequence $(\hat h(x^{k}))_{k \in \N}$ that will be essential to establish the convergence rate of the M-FW algorithm under strongly convexity assumption on the objective function $F$. 
\begin{lemma}\label{seq1:ck223} 
Let $(x^k)_{k \in \N}$  be the sequence generated by  the M-FW algorithm, and let $\hat h(x^k)_{k \in \N}$ and $(e^k)_{k \in \N}$ be as defined in \eqref{defhj}. Assume that {\bf(A2)} holds. Then, 
\begin{equation}\label{key123}
\hat h(x^{k+1})\leq \left[1 + \frac{\mu\gamma_k\theta^{FW}(x^k)}{\min_{j \in \J }\inner{\nabla F_j(x^k)}{ e^k}^2} \right]\hat h(x^{k}), \quad \forall k \in \N.
\end{equation}
\end{lemma}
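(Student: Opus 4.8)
The plan is to combine the one-step functional decrease from Lemma~\ref{seq1:ck} with the strong-convexity estimate from Lemma~\ref{seq1:ck22}, paying close attention to signs since $\theta^{FW}(x^k)<0$. First I would subtract $F_j(x^*)$ from both sides of \eqref{key1} and use the definition of $h_j(\cdot)$ in \eqref{defhj} to get $h_j(x^{k+1})\leq h_j(x^k)+\tfrac{\gamma_k}{2}\theta^{FW}(x^k)$ for every $j\in\J$. Because the added term $\tfrac{\gamma_k}{2}\theta^{FW}(x^k)$ is independent of $j$, taking the minimum over $j\in\J$ on both sides yields the scalar recursion
\[
\hat h(x^{k+1})\leq \hat h(x^k)+\frac{\gamma_k}{2}\,\theta^{FW}(x^k).
\]

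Next I would convert \eqref{key2} into a lower bound on $\min_{j\in\J}\inner{\nabla F_j(x^k)}{e^k}^2$. Rearranging \eqref{key2} gives $2\mu\,h_j(x^k)\leq \inner{\nabla F_j(x^k)}{e^k}^2$ for every $j\in\J$; using $\hat h(x^k)=\min_{j\in\J}h_j(x^k)\leq h_j(x^k)$ and then minimizing the right-hand side over $j$ produces $2\mu\,\hat h(x^k)\leq \min_{j\in\J}\inner{\nabla F_j(x^k)}{e^k}^2$, or equivalently $\tfrac{\mu\,\hat h(x^k)}{\min_{j\in\J}\inner{\nabla F_j(x^k)}{e^k}^2}\leq \tfrac12$.

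Finally I would splice the two estimates together. The delicate point is the sign: since $\gamma_k\theta^{FW}(x^k)<0$, multiplying the inequality $\tfrac{\mu\,\hat h(x^k)}{\min_{j\in\J}\inner{\nabla F_j(x^k)}{e^k}^2}\leq\tfrac12$ by this negative quantity \emph{reverses} it, giving
\[
\frac{\gamma_k}{2}\,\theta^{FW}(x^k)\leq \frac{\mu\,\gamma_k\,\theta^{FW}(x^k)}{\min_{j\in\J}\inner{\nabla F_j(x^k)}{e^k}^2}\,\hat h(x^k).
\]
Substituting this into the scalar recursion from the first step and factoring out $\hat h(x^k)$ gives precisely \eqref{key123}. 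I expect the main obstacle to be exactly this sign reversal---one must not inadvertently preserve the inequality direction---together with the correct placement of the minimization over $j\in\J$, so that the denominator matches the one appearing in \eqref{key123}.
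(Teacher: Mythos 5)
Your proposal is correct and follows essentially the same route as the paper: both combine the descent inequality \eqref{key1} with the strong-convexity bound \eqref{key2}, and both hinge on the sign reversal when multiplying by the negative quantity $\gamma_k\theta^{FW}(x^k)$. The only cosmetic difference is that the paper rewrites the recursion with the factor $1+\frac{\gamma_k\theta^{FW}(x^k)}{2\hat h(x^k)}$ and bounds $\frac{1}{2\hat h(x^k)}$ from below, whereas you keep the recursion additive and bound the term $\frac{\gamma_k}{2}\theta^{FW}(x^k)$ directly; the two are equivalent.
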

\begin{proof}  
For every $k\in \N$, it follows from Lemma~\ref{seq1:ck22} and  the definition of $\hat h$ in \eqref{defhj} that
\[
0<\hat h(x^{k})\leq \frac{\min_{j\in \J}\inner{\nabla F_j(x^k)}{ e^k}^2}{2\mu},
\]
which yields
\[
\frac{1}{2\hat h(x^{k})}\geq  \frac{\mu}{\min_{j \in \J}\inner{\nabla F_j(x^k)}{ e^k}^2}.
\]
On the other hand, it follows from  \eqref{key1} and the definition of $\hat h$ in \eqref{defhj} that
\begin{align*}
\hat h(x^{k+1})&\leq \left[1 +\frac{\gamma_k\theta^{FW}(x^k)}{2\hat h(x^{k})} \right]\hat h(x^{k}).
\end{align*}
Therefore, \eqref{key123} follows by combining  the last two inequalities  and the facts that $\theta^{FW}(x^k)<0$ and $\hat h(x^k)>0$.
\end{proof} 

The next section is devoted to the main results of this paper. Specifically, under different set of assumptions on the objective function $F$ and the feasible set $\X$, we establish improved convergence rates for the  sequences  $(\hat h(x^k))_{k\in \N}$ and $(\theta(x^k))_{k\in \N}$.

\section{Improved convergence rates for the M-FW algorithm}\label{sec:main}

We start this section by establishing the linear convergence of the  M-FW algorithm under the  assumptions, besides of {\bf (A1)}, that (i) $F$ is strongly convex on $\X$  and (ii) the  limit point  $x^*$ of  $(x^k)_{k \in \N}$ is in the relative interior of $\X$.


\begin{theorem}\label{theo:interior}
Assume that  {\bf(A2)} holds and   $x^* \in \text{relint}(\X)$. Let $r>0$ and $k_0\in \N$ be such that  $B(x^*, 2r)\cap \text{aff}(\X)\subset \X$ and $x_k\in B(x^*, r)$ for all $k\geq k_0$. Then, the following inequalities hold
\begin{equation}\label{ineq1:convLinear}
\hat h(x^{k+1})\leq {\left[1 - \frac{\mu r^2}{LD_{\X}^2}\right]\hat h(x^{k})}, \quad \forall k \geq k_0,
\end{equation}
and
\begin{equation}\label{ineq1:convRLinear-xktheta}
\sum_{i=m}^k|\theta^{FW}(x^i)|^2\leq 2L\D^2_\X \hat h(x^m), \quad \forall  k\geq m \geq 1.
\end{equation}
\end{theorem}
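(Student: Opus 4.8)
The plan is to derive \emph{both} inequalities from a single per-iteration ``sufficient decrease'' written in terms of the squared optimality gap:
\begin{equation}\label{plan:decrease}
\hat h(x^{i})-\hat h(x^{i+1})\geq \frac{|\theta^{FW}(x^i)|^2}{2LD_{\X}^2},\qquad \forall i .
\end{equation}
To obtain \eqref{plan:decrease} I would revisit the quadratic $q(\gamma)=\gamma\theta^{FW}(x^i)+\tfrac L2\gamma^2\|d^{FW}(x^i)\|^2$ minimized in \eqref{def:gamma-k}. In the short-step regime $\gamma_i=-\theta^{FW}(x^i)/(L\|d^{FW}(x^i)\|^2)$ the realized decrease equals $|\theta^{FW}(x^i)|^2/(2L\|d^{FW}(x^i)\|^2)$, which is at least the right-hand side of \eqref{plan:decrease} because $\|d^{FW}(x^i)\|\leq D_{\X}$. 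The full-step regime $\gamma_i=1$ is the delicate one and is postponed to the last paragraph.

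Granting \eqref{plan:decrease}, inequality \eqref{ineq1:convLinear} follows from a lower bound on $|\theta^{FW}(x^k)|$ coming from the relative-interior hypothesis. For $k\geq k_0$ I would test the subproblem \eqref{probgrad1} with the feasible point $y=x^k+re^k$: since $\|y-x^*\|\leq\|x^k-x^*\|+r\leq 2r$ and $y\in\text{aff}(\X)$, the inclusion $B(x^*,2r)\cap\text{aff}(\X)\subset\X$ guarantees $y\in\X$ (this is exactly where the factor $2r$ is used, the extra $r$ absorbing the distance $\|x^k-x^*\|\le r$). Optimality then gives $\theta^{FW}(x^k)\leq r\max_{j\in\J}\inner{\nabla F_j(x^k)}{e^k}$, and since $\inner{\nabla F_j(x^k)}{e^k}<0$ for every $j$ (by convexity and $F_j(x^*)\leq F_j(x^k)$, cf.\ Lemma~\ref{seq1:ck22}), this yields $|\theta^{FW}(x^k)|\geq r\sqrt{\min_{j\in\J}\inner{\nabla F_j(x^k)}{e^k}^2}$. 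Combining with Lemma~\ref{seq1:ck22}, which gives $\inner{\nabla F_j(x^k)}{e^k}^2\geq 2\mu h_j(x^k)$ for each $j$ and hence $\min_{j\in\J}\inner{\nabla F_j(x^k)}{e^k}^2\geq 2\mu\hat h(x^k)$, I get $|\theta^{FW}(x^k)|^2\geq 2\mu r^2\hat h(x^k)$. Substituting into \eqref{plan:decrease} produces $\hat h(x^{k+1})\leq\bigl(1-\mu r^2/(LD_\X^2)\bigr)\hat h(x^k)$, which is \eqref{ineq1:convLinear}.

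Inequality \eqref{ineq1:convRLinear-xktheta} is then immediate by telescoping: summing \eqref{plan:decrease} over $i=m,\dots,k$ and discarding $\hat h(x^{k+1})\geq 0$ gives $\sum_{i=m}^{k}|\theta^{FW}(x^i)|^2\leq 2LD_\X^2\bigl(\hat h(x^m)-\hat h(x^{k+1})\bigr)\leq 2LD_\X^2\hat h(x^m)$.

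The main obstacle, which I expect to be the crux of the whole argument, is validating \eqref{plan:decrease} in the full-step case $\gamma_i=1$: there the exact decrease is only $|\theta^{FW}(x^i)|-\tfrac L2\|d^{FW}(x^i)\|^2$, and this dominates $|\theta^{FW}(x^i)|^2/(2LD_\X^2)$ precisely when $|\theta^{FW}(x^i)|\leq LD_\X^2$. I would therefore establish the a priori bound $|\theta^{FW}(x)|\leq LD_\X^2$ for every $x\in\X$, using $x^*\in\text{relint}(\X)$ in an essential way. Stationarity \eqref{critical1} at the interior point $x^*$ forces $0$ to be a convex combination of the gradients (after projecting onto the subspace parallel to $\text{aff}(\X)$), say $\sum_{j\in\J}\lambda_j\nabla F_j(x^*)=0$ with $\lambda_j\geq0$ and $\sum_{j\in\J}\lambda_j=1$. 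The weighted objective $\bar F=\sum_{j\in\J}\lambda_j F_j$ is $L$-smooth and, working within $\text{aff}(\X)$, is minimized at $x^*$, so $\|\nabla\bar F(x)\|\leq L\|x-x^*\|\leq LD_\X$ on $\X$. Since $\max_{j\in\J}\inner{\nabla F_j(x)}{y-x}\geq\inner{\nabla\bar F(x)}{y-x}$ for all $y$, taking the minimum over $y\in\X$ gives $\theta^{FW}(x)\geq\min_{y\in\X}\inner{\nabla\bar F(x)}{y-x}\geq -\|\nabla\bar F(x)\|D_\X\geq -LD_\X^2$, i.e.\ $|\theta^{FW}(x)|\leq LD_\X^2$. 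This closes the full-step case of \eqref{plan:decrease} and completes both proofs.
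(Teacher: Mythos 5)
Your proposal is correct in substance and, at its crux, takes a genuinely different route from the paper. For the linear rate \eqref{ineq1:convLinear} the two arguments share the same ingredients: feasibility of $x^k+re^k$ (obtained from $B(x^*,2r)\cap\text{aff}(\X)\subset\X$), the resulting bound $|\theta^{FW}(x^k)|\geq r\sqrt{\min_{j\in\J}\inner{\nabla F_j(x^k)}{e^k}^2}$, and Lemma~\ref{seq1:ck22}; your version merely reorganizes this around the per-iteration decrease instead of the recursion \eqref{key123}. The real divergence is how the full-step case $\gamma_k=1$ is neutralized. The paper shows $\gamma_k<1$ for all $k\geq k_0$: if $\gamma_k=1$, then $p^{FW}(x^k)=x^{k+1}\in B(x^*,r)$, hence $p^{FW}(x^k)\in\text{relint}(\X)$, so one can move past it inside $\X$ and contradict the minimality defining $\theta^{FW}(x^k)$. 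You instead prove the a priori bound $|\theta^{FW}(x)|\leq LD_{\X}^2$ on all of $\X$ via multipliers at the stationary relative-interior point $x^*$. Your route buys something concrete: the paper derives \eqref{ineq1:convRLinear-xktheta} by telescoping $|\theta^{FW}(x^i)|^2/(2L\|d^{FW}(x^i)\|^2)\leq F_j(x^i)-F_j(x^{i+1})$ ``for all $i\in\N$'', but this per-iteration bound is justified there by the short-step formula, which the paper establishes only for $i\geq k_0$; at an early full-step iteration \eqref{key1} yields only a decrease of $|\theta^{FW}(x^i)|/2$, which is weaker exactly in that regime. Your decrease inequality $\hat h(x^i)-\hat h(x^{i+1})\geq|\theta^{FW}(x^i)|^2/(2LD_{\X}^2)$, valid for every $i\geq 1$ once $|\theta^{FW}(x^i)|\leq LD_{\X}^2$ is in hand, closes precisely this case, so your treatment of \eqref{ineq1:convRLinear-xktheta} for all $k\geq m\geq1$ is tighter than the paper's own.

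Two steps in your last paragraph need repair, though both are fixable and your conclusion is true. First, stationarity at $x^*\in\text{relint}(\X)$ yields $\sum_{j\in\J}\lambda_j P\nabla F_j(x^*)=0$ only for the orthogonal projection $P$ onto the subspace parallel to $\text{aff}(\X)$ (your separation argument for this is sound); the full gradient $\nabla\bar F(x^*)$ need not vanish, so the bound you invoke must be stated for $P\nabla\bar F$, not $\nabla\bar F$ -- harmless in the end, since $y-x$ lies in that subspace whenever $x,y\in\X$. Second, {\bf (A1)} defines $L$-smoothness by the two-sided inequality \eqref{ineqs:convgrad-lipschitz} on $\X$, and for a convex function this descent-lemma form on a compact set does not immediately give the Lipschitz-gradient estimate $\|P\nabla\bar F(x)-P\nabla\bar F(x^*)\|\leq L\|x-x^*\|$ that your step ``$\|\nabla\bar F(x)\|\leq L\|x-x^*\|$'' uses; one only gets $\inner{\nabla\bar F(x)-\nabla\bar F(x^*)}{x-x^*}\leq L\|x-x^*\|^2$. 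The cleanest fix avoids gradient norms entirely: since $P\nabla\bar F(x^*)=0$, the point $x^*$ minimizes $\bar F$ over $\X$ and the descent lemma at $x^*$ gives $\bar F(x)\leq\bar F(x^*)+\tfrac L2\|x-x^*\|^2$; then the descent lemma at $x$ gives, for any $y\in\X$, $\bar F(x^*)\leq\bar F(y)\leq\bar F(x)+\inner{\nabla\bar F(x)}{y-x}+\tfrac L2\|y-x\|^2$, whence $\inner{\nabla\bar F(x)}{y-x}\geq-\tfrac L2\left(\|x-x^*\|^2+\|y-x\|^2\right)\geq-LD_{\X}^2$, and therefore $\theta^{FW}(x)\geq\min_{y\in\X}\inner{\nabla\bar F(x)}{y-x}\geq-LD_{\X}^2$. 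With this substitution your argument is complete.
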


\begin{proof}  First note that the existence of $r>0$ and $k_0\in \N$ follows  from the fact that the sequence $(x^k)_{k \in \mathbb{N}}$ converges to $x^* \in \text{relint}(\X)$.
Hence, for all $k\geq k_0$, we trivially have   $y^k:=x^k+r  e^k \in \X$, where $ e^k=(x^*-x^k)/\|x^*-x^k\|$. Therefore,  it follows   from \eqref{probgrad1} that
\[
\theta^{FW}(x^k)= \min_{y\in \X} \max_{j \in \J }\inner{\nabla F_j(x^k)}{y-x^k}\leq{r}\max_{j \in \J }\inner{\nabla F_j(x^k)}{ e^k}
<0, \]
where the last inequality is due to the fact that, for all $j \in \J$,  $F_j$ is convex and $F_j(x^*)-F_j(x^k)=-h_j(x^k)<0$ (see \eqref{key2}). Thus,
\begin{equation}\label{a89r}
 -\left(\theta^{FW}(x^k)\right)^2
\leq-r^2\left( \max_{j \in \J }\inner{\nabla F_j(x^k)}{ e^k}\right)^2=-r^2 \min_{j \in \J }\inner{\nabla F_j(x^k)}{ e^k}^2.   
\end{equation}
On the other hand, we have  that \( \gamma_k < 1 \) for all \( k \geq k_0 \). Indeed,  if \( \gamma_k = 1 \), then \( p^{FW}(x^k)=x^{k+1}  \in B(x^*, r) \), which implies that $p^{FW}(x^k)\in  \text{relint}(\X)$. Hence, $z^k:=x^k+t(p^{FW}(x^k)-x^k)\in \X$ for some $t >1$. Thus, we have 
\[
\theta^{FW}(x^k)= \min_{z\in \X} \max_{j \in \J }\inner{\nabla F_j(x^k)}{z-x^k}\leq\max_{j \in \J }\inner{\nabla F_j(x^k)}{ t(p^{FW}(x^k)-x^k)}=t\theta^{FW}(x^k),\]
which is a contradiction with the fact that $t>1$ and $\theta^{FW}(x^k)<0$.
Hence, 
it follows from \eqref{def:gamma-k} that $\gamma_k= {-\theta^{FW}(x^k)}/{(L\|d^{FW}(x^k)\|^2)}$, which combined with \eqref{key123} yields 
\[ 
\hat h(x^{k+1})\leq \left[1 - \frac{\mu(\theta^{FW}(x^k))^2}{L\|d^{FW}(x^k)\|^2\min_{j \in \J }\inner{\nabla F_j(x^k)}{ e^k}^2} \right]\hat h(x^{k}).
\]
Therefore, \eqref{ineq1:convLinear} follows from the latter inequality, \eqref{a89r} and the fact that $\|d^{FW}(x^k)\|\leq D_{\X}$. Now, to prove \eqref{ineq1:convRLinear-xktheta},  note that  \eqref{key1} and the fact that $\gamma_k= {-\theta^{FW}(x^k)}/{(L\|d^{FW}(x^k)\|^2)}$, imply that
$$
    \frac{|\theta^{FW}(x^k)|^2}{2L\|d^{FW}(x^k)\|^2} \leq F_j(x^{k}) - F_j(x^{k+1}), \quad \forall  k \in \N,\; \forall j\in \J,
$$
which, combined with the fact that $\|d^{FW}(x^k)\|\leq \D_\X$, yields, for all $ j\in \J$ and $k\geq m\geq 1$,

$$
   \sum_{i=m}^k |\theta^{FW}(x^i)|^2\leq 2L\D_\X^2  \sum_{i=m}^k(F_j(x^{i}) - F_j(x^{i+1})) = 2L\D_\X^2 (F_j(x^{m}) - F_j(x^{k+1})).
$$
Therefore,  \eqref{ineq1:convRLinear-xktheta} follows
from the last inequality,  the fact that $F(x^{k+1})\geq F(x^*)$ and the definition of $\hat h(x^m)$ in \eqref{defhj}.
\end{proof}

 \begin{remark}\label{Rem:def-thetakbest}
 It follows from \eqref{ineq:theta-xk}  and \eqref{ineq1:convRLinear-xktheta} with $m=k$ that \[ \frac{\mu}{2}\|x^{k+1}-x^*\|^2\leq |\theta^{FW}(x^k)| \leq \sqrt{2L}\D_\X \sqrt{\hat h(x^k)}, \quad k \geq 1. \] On the other hand,  by recursively using \eqref{ineq1:convLinear}, we find that  
 \begin{equation}\label{a5t6}
 \hat h(x^{k})\leq \left[1 - \frac{\mu r^2}{{LD_{\X}^2}}\right]^{k-k_0}\hat h(x^{k_0}), \quad \forall k\geq k_0,
 \end{equation}
 where $k_0$ is as in Theorem~\ref{theo:interior}. From \eqref{boundh}, we immediately have
 \begin{equation}\label{3er5}
   \sqrt{\hat h(x^{k_0})}\leq \sqrt{L}\D_\X/\sqrt{2}.   
 \end{equation}
 Thus, combining the last three inequalities, we get 
 $$
 \frac{\mu}{2}\|x^{k+1}-x^*\|^2 \leq |\theta^{FW}(x^k)|\leq {L{\cal D}^2_{\X}}\left[\sqrt{1 - \frac{\mu r^2}{L{\cal D}_{\X}^2}}\right]^{k-k_0}, \quad \forall k \geq k_0.
 $$
 The above inequalities imply that the sequences $(x^k)_{k\in \N}$ and $(\theta^{FW}(x^k))_{k\in \N}$ converge $R$-linearly under the assumptions of Theorem~\ref{theo:interior}. Moreover, it follows from  \eqref{ineq1:convRLinear-xktheta} with $m=\lfloor k/2\rfloor$,  \eqref{a5t6} and \eqref{3er5} that 
 $$\theta^k_{best}\leq \frac{\sqrt{2}L{\cal D}_{\X}^2}{\sqrt{k}}\left[\sqrt{1 - \frac{\mu r^2}{L{\cal D}_{\X}^2}}\right]^{\lfloor k/2\rfloor-k_0}, \forall k>2k_0,$$
 where $\theta^k_{best}:=\min_{i\in \{\lfloor k/2\rfloor, \ldots,k\}}|(\theta^{FW}(x^i))|$. 
 Roughly speaking, for $k$ sufficiently large, the above bound is of ${\cal O}(\beta^k/\sqrt{k} )$ for some $\beta\in (0,1)$. 
  Recall that {\cite[Theorem~16]{Leandro2022}}  established, under {\bf (A1)}, a convergence rate of  ${\cal O}(1/{k})$ for the sequence  $(\theta^k_{best})_{k\in \N}$.
Therefore, under the additional hypothesis of Theorem~\ref{theo:interior}, we have shown an improved convergence rate for  $(\theta^k_{best})_{k\in \N}$ as well as new convergence rates for $(\theta^{FW}(x^k))_{k\in \N}$ and $(x^k)_{k\in \N}$.
\end{remark}
 
The next two theorems are devoted to the analysis of the convergence rate of the M-FW algorithm under the additional assumption that the feasible set $\X$ is uniformly convex. Both results use the fundamental  recursive inequality on the sequence $(\hat h(x^k))_{k\in \N}$  presented in the following lemma. It is interesting to note that this lemma  does not require  the objective function $F$ to be strongly convex.

\begin{lemma}\label{lemma:Xunifconv} Assume   that the constraint set $\X$ is  $(\alpha,q)$-uniformly convex for some   $\alpha> 0$ and $q\geq2$. 
Let $(x^k)_{k \in \N}$  be the sequence generated by  the M-FW algorithm. 
Then, the following inequalities hold
\begin{equation}\label{ineq:basicRecursive-thetatilde}
 |\theta^{FW}(x^k)| \geq    \frac\alpha4 \|d^{FW}(x^k)\|^q |\tilde \theta(x^k)|, \qquad \hat h(x^{k+1}) \leq \left[1-\min\left\{\frac12,r_k \right\}\right]\hat h(x^{k}), \quad \forall k \in \N, 
\end{equation}
where
\begin{equation}\label{def:rk}
r_k:= \frac{[\hat h(x^{k})]^{(q-2)/q}[\alpha |\tilde \theta(x^k)|]^{2/q}}{2^{(4+q)/q}L}, \qquad  \tilde \theta(x):=\min_{\|z\|\leq 1}\max_{{j \in \J}}\inner{\nabla F_j(x)}{z}.
\end{equation}
\end{lemma}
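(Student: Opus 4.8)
The plan is to establish the two displayed inequalities in \eqref{ineq:basicRecursive-thetatilde} separately, with the first serving as the geometric engine for the second.

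For the first inequality, the idea is to exploit the $(\alpha,q)$-uniform convexity of $\X$ at the specific choice $\gamma=1/2$. Applying \eqref{def:alpha-q unifconv} with $x=x^k$, $y=p^{FW}(x^k)$, $\gamma=1/2$ (so that $\gamma(1-\gamma)=1/4$), and an arbitrary $z$ with $\|z\|\leq 1$ produces a feasible point $w:=x^k+\tfrac12 d^{FW}(x^k)+\tfrac{\alpha}{8}\|d^{FW}(x^k)\|^q z\in\X$. Since $\theta^{FW}(x^k)$ is a minimum over $\X$ of $\max_{j}\inner{\nabla F_j(x^k)}{\cdot-x^k}$, feasibility of $w$ gives $\theta^{FW}(x^k)\leq \max_j\inner{\nabla F_j(x^k)}{w-x^k}$. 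I would then invoke the elementary subadditivity $\max_j(u_j+v_j)\leq\max_j u_j+\max_j v_j$ to split the right-hand side as $\tfrac12\theta^{FW}(x^k)+\tfrac{\alpha}{8}\|d^{FW}(x^k)\|^q\max_j\inner{\nabla F_j(x^k)}{z}$. Choosing $z$ to be the minimizer defining $\tilde\theta(x^k)$ makes the last max equal to $\tilde\theta(x^k)$; absorbing the $\tfrac12\theta^{FW}(x^k)$ term to the left and recalling that $\theta^{FW}(x^k)$ and $\tilde\theta(x^k)$ are both nonpositive then yields $|\theta^{FW}(x^k)|\geq\tfrac{\alpha}{4}\|d^{FW}(x^k)\|^q|\tilde\theta(x^k)|$.

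For the second inequality, I would start from Lemma~\ref{seq1:ck}, which after subtracting $F_j(x^*)$ and minimizing over $j$ gives $\hat h(x^{k+1})\leq\hat h(x^k)-\tfrac12\gamma_k|\theta^{FW}(x^k)|$. I then split according to the two regimes of the step size \eqref{def:gamma-k}. If $\gamma_k=1$, then $|\theta^{FW}(x^k)|\geq L\|d^{FW}(x^k)\|^2$, and combining with the bound $|\theta^{FW}(x^k)|\geq\hat h(x^k)$ (established just before \eqref{defhj} from the gradient inequality) yields the decrease factor $1-\tfrac12$. If instead $\gamma_k=-\theta^{FW}(x^k)/(L\|d^{FW}(x^k)\|^2)<1$, the recursion becomes $\hat h(x^{k+1})\leq\hat h(x^k)-|\theta^{FW}(x^k)|^2/(2L\|d^{FW}(x^k)\|^2)$, and here the first inequality enters: from $|\theta^{FW}(x^k)|\geq\tfrac{\alpha}{4}\|d^{FW}(x^k)\|^q|\tilde\theta(x^k)|$ one bounds $\|d^{FW}(x^k)\|^2\leq(4|\theta^{FW}(x^k)|/(\alpha|\tilde\theta(x^k)|))^{2/q}$, and after a final use of $|\theta^{FW}(x^k)|\geq\hat h(x^k)$ on the residual power of $|\theta^{FW}(x^k)|$ one recovers exactly the factor $r_k$ in \eqref{def:rk}. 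Taking the weaker of the two factors gives $1-\min\{1/2,r_k\}$ uniformly in $k$.

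The main obstacle is the first inequality: the nonobvious part is recognizing that $\gamma=1/2$ is the right value (it maximizes $\gamma(1-\gamma)$ while keeping a full half of the descent direction) and that max-subadditivity together with the $\tilde\theta$-minimizing $z$ is precisely what converts the uniform-convexity ``bulge'' into a lower bound on $|\theta^{FW}(x^k)|$. The case $\gamma_k<1$ is then only careful exponent bookkeeping: tracking that $2-2/q=(2q-2)/q$ and that the constant $2^{(4+q)/q}$ arises as $2\cdot 4^{2/q}=2^{1+4/q}$ from combining the $2L$ denominator with $4^{2/q}=2^{4/q}$, so that the bound matches $r_k\,\hat h(x^k)$ after the last application of $|\theta^{FW}(x^k)|\geq\hat h(x^k)$.
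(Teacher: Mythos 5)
Your proof is correct and takes essentially the same route as the paper: the first inequality is derived identically (uniform convexity applied at $\gamma=1/2$ with $y=p^{FW}(x^k)$ and the $\tilde\theta$-minimizing $z$, then max-subadditivity and absorption of $\tfrac12\theta^{FW}(x^k)$), and the second combines Lemma~\ref{seq1:ck}, the bound $|\theta^{FW}(x^k)|\geq \hat h(x^k)$, and the first inequality, exactly as the paper does. The only cosmetic difference is that the paper bounds $\gamma_k$ from below while keeping the $\min$ in \eqref{def:gamma-k} intact, whereas you split into the two step-size regimes $\gamma_k=1$ and $\gamma_k<1$; the exponent and constant bookkeeping ($2-2/q=1+(q-2)/q$, $2\cdot 2^{4/q}=2^{(4+q)/q}$) agrees in both.
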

\begin{proof}
Let $k\in \N$. It follows from  step~1 of the M-FW algorithm, the first inequality in \eqref{ineqs:convgrad-lipschitz} and  \eqref{defhj} that 
\begin{align}\label{ineq:aux-a1}
\theta^{FW}(x^k)=\min_{y\in\X}\max_{j \in \J }\inner{\nabla F_j(x^k)}{y-x^k}&\leq \max_{j \in \J  }\inner{\nabla F_j(x^k)}{ x^*-x^k}\nonumber\\\nonumber
&\leq \max_{j \in \J }F_j(x^*)-F_j(x^k)\\
&= -\min_{j \in \J  }h_j(x^k)=-\hat h (x^k).
\end{align}
Now,  let $z^k$ and $\tilde x^k$ be defined as
\[
 z^k:=\argmin_{\|z\|\leq 1}\max_{{j \in \J }}\inner{\nabla F_j(x^k)}{z}, \quad  \tilde x^k:=x^k+\frac12d^{FW}(x^k)+\frac\alpha8 \|d^{FW}(x^k)\|^qz^k.
\]
Since $d^{FW}(x^k)= p^{FW}(x^k)-x^k$, we have  $\tilde x^k\in \X$ in view of \eqref{def:alpha-q unifconv} with $x=x^k, y=p^{FW}(x^k), z=z^k$ and $\gamma=1/2$. 
Hence,  it follows from the definitions of $\theta^{FW}(x^k)$ and $\tilde \theta(x^k)$  that
\begin{align*}
\theta^{FW}(x^k)&=\min_{y\in\X}\max_{j \in \J }\inner{\nabla F_j(x^k)}{y-x^k}\leq \max_{j \in \J }\inner{\nabla F_j(x^k)}{\tilde x^k-x^k}\\
&\leq \frac12\max_{j \in \J  }\inner{\nabla F_j(x^k)}{d^{FW}(x^k)} +\frac\alpha8 \|d^{FW}(x^k)\|^q   \max_{j \in \J  }\inner{\nabla F_j(x^k)}{z^k}\\
&= \frac12\theta^{FW}(x^k)+\frac\alpha8 \|d^{FW}(x^k)\|^q \tilde \theta(x^k),
\end{align*}
which, combined with the fact that $\max\{\theta^{FW}(x^k),\tilde\theta(x^k)\} <0$,   implies the first inequality in \eqref{ineq:basicRecursive-thetatilde}.
Now,  using \eqref{key1} and \eqref{ineq:aux-a1} and the definition of $\hat h$ in \eqref{defhj}, we obtain
\begin{align}\label{ineq:recursive-hj}
\hat h(x^{k+1}) 
& \leq \hat h(x^{k})+\frac{\gamma_k}{2} \theta^{FW}(x^k)\nonumber\\
&\leq \left(1-\frac{\gamma_k}2\right)\hat h(x^{k}).
\end{align}
It follows from the definition of $\gamma_k$ in \eqref{def:gamma-k},   \eqref{ineq:aux-a1}, the first inequality in \eqref{ineq:basicRecursive-thetatilde}, and the fact that $\max\{\theta^{FW}(x^k),\tilde\theta(x^k)\} <0$,  that
\begin{align*}
\gamma_k&=\min\left\{1, \frac{|\theta^{FW}(x^k)|}{L\|d^{FW}(x^k)\|^2}\right\}= \min\left\{1, \frac{|\theta^{FW}(x^k)|^{1-2/q}|\theta^{FW}(x^k)|^{2/q}}{L\|d^{FW}(x^k)\|^2}\right\}\\
&\geq \min\left\{1, \frac{[\hat h(x^{k})]^{1-2/q}[\frac\alpha4 \|d^{FW}(x^k)\|^q |\tilde \theta(x^k)|]^{2/q}}{L\|d^{FW}(x^k)\|^2}\right\}\\
&= \min\left\{1, \frac{[\hat h(x^{k})]^{1-2/q}[\alpha |\tilde \theta(x^k)|]^{2/q}}{2^{4/q}L}\right\}
\end{align*}
which, combined with \eqref{ineq:recursive-hj} and the definition of $r_k$ in \eqref{def:rk},   proves the second inequality in \eqref{ineq:basicRecursive-thetatilde}.
\end{proof}

We next establish a faster convergence rate  for the M-FW algorithm by assuming, besides {\bf (A1)}, that {\bf (A2)} holds and the set $\X$ is $(\alpha,q)$-uniformly convex. Roughly speaking,  we show that $\hat h(x^k)=\mathcal{O}(1/k^{q/(q-1)})$. In particular, if  $\X$ is strongly convex, i.e., $(\alpha,2)-$uniformly convex, then  $\hat h(x^k)=\mathcal{O}(1/k^{2})$, which improves the sublinear convergence rate $\hat h(x^k)=\mathcal{O}(1/k)$ proved in \cite{Leandro2022} under the assumption of convexity of $F$.

\begin{theorem}\label{uniformly convex2}
Let $(x^k)_{k \in \N}$  be the sequence generated by  the M-FW  algorithm.  Assume that the constraint set $\X$  is $(\alpha,q)$-uniformly convex for some   $\alpha> 0$ and $q\geq2$, and that {\bf(A2)} holds. Then,  the following inequalities hold
\begin{align}\label{ineq:convrate-hk2}
    \hat h(x^k) &\leq
    \begin{cases}
        \frac{LD_{\X}^2}{2^k}, & \text{for }  1 \leq k \leq k_0, \\
        \left(\frac{8}{\alpha^2\mu}\right)^{1/(q-1)}\left(\frac{2qL}{2q+(q-1)(k-k_0)} \right)^{\frac{q}{q-1}}=\mathcal{O}\left(1/k^\frac{q}{q-1}\right), & \text{for } k > k_0,
    \end{cases}
\end{align}
and
\begin{equation}\label{convrate-thetak-2}
     \sum_{i=m}^k\min\left\{|\theta^{FW}(x^i)|,\frac{|\theta^{FW}(x^i)|^2}{L\|d^{FW}(x^i)\|^2}\right\} \leq 2\hat h(x^m), \quad \forall k\geq m\geq 1,
\end{equation}
where 
\[
k_0 := \max\left\{ \left\lfloor \log_{2} \left( \frac{D_\X^2}{2}\left(\frac{\alpha^2\mu}{8L}\right)^{1/(q-1)} \right) \right\rfloor + 2, 1 \right\}.
\]
\end{theorem}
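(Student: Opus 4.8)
The plan is to feed the recursive inequality of Lemma~\ref{lemma:Xunifconv} into the scalar recursion of Lemma~\ref{lemmarecur}, after using strong convexity to lower-bound $|\tilde\theta(x^k)|$ by a power of $\hat h(x^k)$, and to prove \eqref{convrate-thetak-2} separately by telescoping the progress inequality \eqref{key1}. The conceptual core is a lower bound on $|\tilde\theta(x^k)|$ in terms of $\hat h(x^k)$. Since $e^k=(x^*-x^k)/\|x^*-x^k\|$ has unit norm, it is feasible for the minimization defining $\tilde\theta(x^k)$, so $\tilde\theta(x^k)\leq \max_{j\in\J}\inner{\nabla F_j(x^k)}{e^k}$. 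By convexity each inner product is negative (it is at most $F_j(x^*)-F_j(x^k)=-h_j(x^k)<0$), whence $|\tilde\theta(x^k)|\geq |\max_{j\in\J}\inner{\nabla F_j(x^k)}{e^k}|=\min_{j\in\J}|\inner{\nabla F_j(x^k)}{e^k}|$. Applying the bound $\inner{\nabla F_j(x^k)}{e^k}^2\geq 2\mu\, h_j(x^k)$ from Lemma~\ref{seq1:ck22} to every $j$ and then taking the minimum over $j$ gives $\min_{j\in\J}\inner{\nabla F_j(x^k)}{e^k}^2\geq 2\mu\,\hat h(x^k)$, so that
\[
|\tilde\theta(x^k)|\geq \sqrt{2\mu\,\hat h(x^k)}.
\]

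With this in hand I would substitute it into the definition of $r_k$ in \eqref{def:rk}. The powers of $\hat h(x^k)$ combine to exponent $(q-2)/q+1/q=(q-1)/q$, yielding $r_k\geq c_2\,[\hat h(x^k)]^{(q-1)/q}$ with $c_2=\alpha^{2/q}\mu^{1/q}/(2^{(3+q)/q}L)$. Then the second inequality in \eqref{ineq:basicRecursive-thetatilde} reads $\hat h(x^{k+1})\leq[1-\min\{1/2,\,c_2[\hat h(x^k)]^{(q-1)/q}\}]\hat h(x^k)$. Taking $c_0=LD_\X^2/2$ (the universal bound \eqref{boundh}), $c_1=1/2$, and $\beta=(q-1)/q$, I would invoke Lemma~\ref{lemmarecur}. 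The first branch $c_0(1-c_1)^{k-1}$ immediately equals $LD_\X^2/2^k$, and simplifying $(c_1/c_2)^{1/\beta}$ together with $c_1\beta=(q-1)/(2q)$ in the second branch reproduces the stated $\mathcal{O}(1/k^{q/(q-1)})$ expression; the same simplification of $c_0/(c_1/c_2)^{1/\beta}$ inside the logarithm reproduces exactly the stated $k_0$.

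For \eqref{convrate-thetak-2} I would argue independently of \textbf{(A2)}. The progress inequality \eqref{key1} and $\theta^{FW}(x^k)<0$ give $F_j(x^k)-F_j(x^{k+1})\geq\tfrac12|\theta^{FW}(x^k)|\gamma_k$, and since $\gamma_k=\min\{1,\,|\theta^{FW}(x^k)|/(L\|d^{FW}(x^k)\|^2)\}$ one has $|\theta^{FW}(x^k)|\gamma_k=\min\{|\theta^{FW}(x^k)|,\,|\theta^{FW}(x^k)|^2/(L\|d^{FW}(x^k)\|^2)\}$. Summing over $i=m,\ldots,k$ telescopes to $F_j(x^m)-F_j(x^{k+1})\leq h_j(x^m)$, using $F_j(x^{k+1})\geq F_j(x^*)$; choosing $j$ to attain $\hat h(x^m)$ and multiplying by $2$ gives \eqref{convrate-thetak-2}.

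The main obstacle I anticipate is the first step, namely correctly lower-bounding $|\tilde\theta(x^k)|$ by $\sqrt{2\mu\,\hat h(x^k)}$ — in particular the sign bookkeeping that lets one pass from the $\max$ over $j$ to a minimum of squared inner products, each of which is then controlled termwise by Lemma~\ref{seq1:ck22}. The constant chase needed to match the explicit rate and the closed form of $k_0$ is lengthy but entirely routine once $c_2$, $c_1$, $c_0$, and $\beta$ are identified.
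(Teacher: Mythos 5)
Your proposal is correct and follows essentially the same route as the paper's proof: the key bound $|\tilde\theta(x^k)|\geq\sqrt{2\mu\,\hat h(x^k)}$ via Lemma~\ref{seq1:ck22}, substitution into $r_k$ to get $r_k\geq c_2[\hat h(x^k)]^{(q-1)/q}$ with the identical constant $c_2=\tfrac{1}{2L}(\alpha^2\mu/8)^{1/q}$, an application of Lemma~\ref{lemmarecur} with $c_0=LD_\X^2/2$, $c_1=1/2$, $\beta=(q-1)/q$, and the same telescoping of \eqref{key1} for \eqref{convrate-thetak-2}. The only blemish is cosmetic: the justification of negativity of $\inner{\nabla F_j(x^k)}{e^k}$ should read $\inner{\nabla F_j(x^k)}{e^k}\leq (F_j(x^*)-F_j(x^k))/\|x^*-x^k\|<0$ (the normalization by $\|x^*-x^k\|$ is missing in your parenthetical), but since only the sign is used, this does not affect the argument.
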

\begin{proof} 
Let  $e^k$ and $\tilde \theta(x^k)$ as in \eqref{defhj}  and \eqref{def:rk}, respectively. Using the first inequality in  \eqref{ineqs:convgrad-lipschitz},  we have
$$
\tilde \theta(x^k)\leq \max_{j\in\J }\inner{\nabla F_j(x^k)}{e^k}\leq \frac{\max_{j\in\J }F_j(x^*)-F_j(x^k)}{\|x^k-x^*\|}<0.
$$
Hence, since $F_j$ is $\mu$-strongly convex for every  $j\in J$, it follows from \eqref{key2} and the definition of $\hat h(x^k)$ in \eqref{defhj} that
\[
0<\hat h(x^{k}) \leq \frac{\min_{j \in \J }\inner{\nabla F_j(x^k)}{ e^k}^2}{2\mu}=\frac{(\max_{j \in \J }\inner{\nabla F_j(x^k)}{ e^k})^2}{2\mu}\leq \frac{\tilde \theta(x^k)^2}{2\mu},
\]
which yields
\[
\tilde \theta(x^k)^2\geq 2\mu\hat h(x^{k}).
\]
It follows by combining the latter inequality  and the definition of $r_k$ in \eqref{def:rk} that
$$
r_k=\frac{[\hat h(x^{k})]^{(q-2)/q}}{2L}\left(\frac{\alpha^2 \tilde \theta(x^k)^2}{16}\right)^{1/q} \geq \frac{[\hat h(x^{k}) ]^{(q-1)/q}}{2L}\left(\frac{\alpha^2\mu}{8}\right)^{1/q}.
$$
Hence,  from the second inequality in \eqref{ineq:basicRecursive-thetatilde}, we have
\begin{align*}
  \hat h(x^{k+1}) \leq \left[1-\min\left\{\frac{1}{2},\frac{[\hat h(x^{k}) ]^{(q-1)/q}}{2L}\left(\frac{\alpha^2\mu}{8}\right)^{1/q} \right\}\right]\hat h(x^{k}), \qquad  \forall k\geq 1.
\end{align*} 
Moreover, it follows from \eqref{boundh} that 
\[ \hat h(x^{1}) \leq \frac {LD_{\X}^2}2.\]
Therefore,  \eqref{ineq:convrate-hk2} follows immediately from the last two inequalities and  Lemma~\ref{lemmarecur} with $$c_0=\frac{LD_\X^2}{2}, \quad c_1=\frac{1}{2}, \quad c_2=\frac{1}{2L}\left(\frac{\alpha^2\mu}{8}\right)^{\frac{1}{q}},  \quad \beta=\frac{q-1}{q},
$$
and some algebraic manipulations.  Now, to prove \eqref{convrate-thetak-2},  note that  \eqref{key1} and the definition of $\gamma_k$ in \eqref{def:gamma-k} imply that
$$
    \frac{1}{2}\min\left\{|\theta^{FW}(x^i)|,\frac{|\theta^{FW}(x^i)|^2}{L\|d^{FW}(x^i)\|^2}\right\} \leq F_j(x^{i}) - F_j(x^{i+1}), \quad \forall  i\in \N,\; \forall j\in \J.
$$
Hence, for all $ j\in \J$ and $k\geq m\geq 1$, we have
\begin{align*}
   \sum_{i=m}^k \min\left\{|\theta^{FW}(x^i)|,\frac{|\theta^{FW}(x^i)|^2}{L\|d^{FW}(x^i)\|^2}\right\} &\leq 2\sum_{i=m}^k(F_j(x^{i}) - F_j(x^{i+1})) \\
   &= 2(F_j(x^{m}) - F_j(x^{k+1})).
\end{align*}
Therefore, \eqref{convrate-thetak-2} follows
from the last inequality,  the fact that $F(x^{k+1})\geq F(x^*)$, and  the definition of $\hat h(x^m)$ in \eqref{defhj}.
\end{proof}
\begin{remark}\label{rem:secremarkonthetak}
It follows from \eqref{convrate-thetak-2} with $m:=k$ and the fact that $\|d^{FW}(x^k)\|\leq \D_\X$ that 
\begin{equation}\label{ineq:aux-rem3}
     \min\left\{|\theta^{FW}(x^k)|,\frac{|\theta^{FW}(x^k)|^2}{L\D_\X^2}\right\} \leq 2\hat h(x^k), \quad \forall k\geq  1.
\end{equation}
In particular,  the sequence $(\theta^{FW}(x^k))_{k\in \N}$ converges to zero and, for all $k$ sufficiently large, we have $|\theta^{FW}(x^k)|^2/[L\D_\X^2]\leq |(\theta^{FW}(x^k))|$. Hence, from \eqref{ineq:theta-xk}, \eqref{ineq:convrate-hk2}, and \eqref{ineq:aux-rem3},  we have, for all $k$ sufficiently large, 
 $$
 \frac{\mu}{2}\|x^{k+1}-x^*\|^2 \leq |\theta^{FW}(x^k)|\leq D_{\X}\sqrt{2L\hat h(x^k)}\approx \mathcal{O}\left(1/k^\frac{q}{2(q-1)}\right),
 $$
which implies that the sequences $(x^k)_{k\in \N}$ and $(\theta^{FW}(x^k))_{k\in \N}$ converge at a rate $\mathcal{O}\left(1/k^\frac{q}{4(q-1)}\right)$ and $\mathcal{O}\left(1/k^\frac{q}{2(q-1)}\right)$, respectively, under the assumptions of Theorem~\ref{uniformly convex2}.
Again as $(\theta^{FW}(x^k))_{k\in \N}$ converges to zero, it follows \eqref{ineq:convrate-hk2}, \eqref{convrate-thetak-2} with  $m=\lfloor k/2\rfloor$ and simple algebraic manipulations that, for all $k$ sufficiently large,
$$\theta^k_{best}\approx \mathcal{O}\left(\frac{1}{k^\frac{2q-1}{2(q-1)}}\right), $$
where $\theta^k_{best}$ is as in Remark~\ref{Rem:def-thetakbest}. Since $q \geq 2$, 
the above bound improves the rate of ${\cal O}(1/k)$ for  the  sequence  $(\theta^k_{best})_{k\in \N}$ established in  {{\cite[Theorem~16]{Leandro2022}}}  under {\bf (A1)}.
\end{remark}

We are now ready to establish a convergence rate result for the M-FW algorithm when (i) the constraint set $\X$ is $(\alpha,q)-$uniformly convex and (ii) the sequence $(\tilde{\theta}(x^k))_{k \in \N}$ defined in \eqref{def:rk} stays away from zero.  The next convergence rate bounds are better than those in Theorem~\ref{uniformly convex2}, and, in particular,  the linear convergence is achieved  when the constraint set is strongly convex (i.e., $(\alpha,2)-$uniformly convex).  Although the assumption on $(\tilde{\theta}(x^k))_{k \in \N}$ is considered on the sequence generated by the M-FW algorithm, we will show in Remark~\ref{rem:thetatildexk} that it holds under suitable conditions on problem~\eqref{vectorproblem}.



\begin{theorem}\label{theorem:Xunifconv} Let $(x^k)_{k \in \N}$  be the sequence generated by  the M-FW algorithm. Assume that  the  constraint set $\X$  is $(\alpha,q)$-uniformly convex for some   $\alpha> 0$ and $q\geq2$,  and   $c:=\inf_k|\tilde \theta(x^k)|>0$. Then,  
\begin{align}\label{rt65}
    \hat h(x^k) &\leq
    \begin{cases} \left[\max\left\{\frac12,1-\frac{\zeta}{L}\right\}\right]\hat h(x^{k-1}) &  \text{for } q=2, k\geq 1,\\
        \frac{LD_{\X}^2}{2^k} & \text{for } q> 2,  1 \leq k \leq k_0, \\
        \left( \frac{qL}{\zeta[2q+(q-2)(k-k_0)]} \right)^{\frac{q}{q-2}}=\mathcal{O}\left(1/k^{\frac{q}{q-2}}\right) & \text{for } q> 2 ,  k > k_0,
    \end{cases}
\end{align}
and
\begin{equation}\label{convrate-thetak-3}
     \sum_{i=m}^k\min\left\{|\theta^{FW}(x^i)|,\frac{|\theta^{FW}(x^i)|^2}{L\|d^{FW}(x^i)\|^2}\right\} \leq 2\hat h(x^m), \quad \forall k\geq m\geq 1,
\end{equation}
where
\[
 \quad \zeta=\zeta(\alpha,q,c):=\frac{[\alpha c]^{2/q}}{{2^{(4+q)/q}}}, \quad 
k_0 := \max\left\{ \left\lfloor \frac{2}{q-2}\log_{2} \left( \frac{2D_{\X}^{q-2}\zeta^{q/2}}{L}\right) \right\rfloor + 2, 1 \right\}.
\]
\end{theorem}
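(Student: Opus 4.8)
The plan is to feed the recursive inequality from Lemma~\ref{lemma:Xunifconv} into the abstract recursion Lemma~\ref{lemmarecur}, using the hypothesis $c=\inf_k|\tilde\theta(x^k)|>0$ to convert the data-dependent factor $r_k$ into a clean power of $\hat h(x^k)$ with a constant coefficient. Since $|\tilde\theta(x^k)|\geq c$ for every $k$, the definition of $r_k$ in \eqref{def:rk} gives
\[
r_k=\frac{[\hat h(x^{k})]^{(q-2)/q}[\alpha|\tilde\theta(x^k)|]^{2/q}}{2^{(4+q)/q}L}\geq \frac{[\alpha c]^{2/q}}{2^{(4+q)/q}}\cdot\frac{[\hat h(x^{k})]^{(q-2)/q}}{L}=\frac{\zeta}{L}\,[\hat h(x^{k})]^{(q-2)/q},
\]
with $\zeta=[\alpha c]^{2/q}/2^{(4+q)/q}$ as in the statement. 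Substituting this lower bound into the second inequality of \eqref{ineq:basicRecursive-thetatilde} yields the master recursion
\[
\hat h(x^{k+1})\leq \left[1-\min\left\{\tfrac12,\ \tfrac{\zeta}{L}[\hat h(x^{k})]^{(q-2)/q}\right\}\right]\hat h(x^{k}),\qquad \forall k\geq 1.
\]

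For $q=2$ the exponent $(q-2)/q$ vanishes, so $[\hat h(x^{k})]^{(q-2)/q}=1$ and the bracket collapses to the constant $1-\min\{1/2,\zeta/L\}=\max\{1/2,1-\zeta/L\}$; this is exactly the first line of \eqref{rt65} and delivers the linear rate immediately. For $q>2$ I would invoke Lemma~\ref{lemmarecur} with
\[
c_0=\frac{LD_{\X}^2}{2},\quad c_1=\frac12,\quad c_2=\frac{\zeta}{L},\quad \beta=\frac{q-2}{q},
\]
after checking its hypotheses: $c_1<1$, all constants positive, and $\hat h(x^1)\leq c_0$ from \eqref{boundh}. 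Its first branch gives $\hat h(x^k)\leq c_0(1-c_1)^{k-1}=LD_{\X}^2/2^{k}$ for $1\leq k\leq k_0$, matching the second line of \eqref{rt65}; its second branch gives $\hat h(x^k)\leq\big((c_1/c_2)/(1+c_1\beta(k-k_0))\big)^{1/\beta}$, and substituting $c_1/c_2=L/(2\zeta)$, $c_1\beta=(q-2)/(2q)$, and $1/\beta=q/(q-2)$, then clearing the common factor $2q$, produces exactly $\big(qL/(\zeta[2q+(q-2)(k-k_0)])\big)^{q/(q-2)}$, the third line of \eqref{rt65}.

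The fiddliest step, and the one I expect to be the main obstacle, is verifying that the $k_0$ supplied by Lemma~\ref{lemmarecur}, namely $\lfloor\log_{1-c_1}((c_1/c_2)^{1/\beta}/c_0)\rfloor+2$, coincides with the stated expression. Writing $\log_{1/2}(\cdot)=-\log_2(\cdot)$ and expanding $\log_2$ of $(LD_{\X}^2/2)\cdot(2\zeta/L)^{q/(q-2)}$, one collects the $\log_2 L$ terms into $-\tfrac{2}{q-2}\log_2 L$, the $\log_2\zeta$ terms into $\tfrac{q}{q-2}\log_2\zeta$, the $D_{\X}$ terms into $2\log_2 D_{\X}$, and the constants into $2/(q-2)$; factoring $2/(q-2)$ out of the sum recovers precisely the argument $2D_{\X}^{q-2}\zeta^{q/2}/L$ appearing in the stated $k_0$. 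Finally, \eqref{convrate-thetak-3} is formally identical to \eqref{convrate-thetak-2}, so its proof carries over verbatim: from \eqref{key1} and the definition of $\gamma_k$ in \eqref{def:gamma-k} one gets $\tfrac12\min\{|\theta^{FW}(x^i)|,|\theta^{FW}(x^i)|^2/(L\|d^{FW}(x^i)\|^2)\}\leq F_j(x^i)-F_j(x^{i+1})$ for each $j\in\J$, and telescoping from $m$ to $k$, together with $F(x^{k+1})\geq F(x^*)$ and the definition of $\hat h(x^m)$ in \eqref{defhj}, yields the bound.
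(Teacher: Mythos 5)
Your proposal is correct and follows the same route as the paper's proof: lower-bound $r_k$ via $|\tilde\theta(x^k)|\geq c$ to get the master recursion, read off the $q=2$ case directly, and invoke Lemma~\ref{lemmarecur} with $c_0=LD_{\X}^2/2$, $c_1=1/2$, $c_2=\zeta/L$, $\beta=(q-2)/q$ for $q>2$, with \eqref{convrate-thetak-3} proved exactly as \eqref{convrate-thetak-2}. The only difference is that you explicitly carry out the ``algebraic manipulations'' (the substitution into the second branch and the verification of $k_0$) that the paper leaves to the reader, and your computations there are correct.
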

\begin{proof} Since $c=\inf_k|\tilde \theta(x^k)|>0$, it follows from the second inequality in \eqref{ineq:basicRecursive-thetatilde} and the definition of $\zeta$ that 
\begin{equation*}
      \hat h(x^{k+1}) \leq \left[1-\min\left\{\frac{1}{2},\frac{\zeta}{L}[\hat h(x^{k})]^{(q-2)/q}\right\}\right]\hat h(x^{k}), \quad \forall k \in \N. 
\end{equation*} 
The first inequality in \eqref{rt65} follows immediately from the above inequality with $q=2$. Now, note that \eqref{boundh} implies that 
\[ \hat h(x^{1}) \leq \frac {LD_{\X}^2}2.\]
 Thus,  if  $q>2$ the result follows immediately  from the last two  inequalities above, Lemma~\ref{lemmarecur} with $c_0=LD_{\X}^2/2$, $c_1=1/2$,  $c_2={\zeta/L}$ and $\beta=(q-2)/q$, and some algebraic manipulations. 
Since the proof of \eqref{convrate-thetak-3} follows the same steps as those of \eqref{convrate-thetak-2}, it is omitted.
\end{proof}
We finish this section by presenting two remarks. The first one shows how convergence rate of the sequence $(\theta(x^k))_{k\in \N}$ can be obtained from Theorem~\ref{theorem:Xunifconv}, and compares it with the one obtained in \cite{Leandro2022}. The second remark discusses some conditions that imply the assumption on $(\tilde \theta(x^k))_{k\in \N}$ considered in Theorem~\ref{theorem:Xunifconv}.
\begin{remark}
Similarly to Remark~\ref{rem:secremarkonthetak}, we can show that the sequences   $(\theta(x^k))_{k\in \N}$ and $(\theta^k_{best})_{k\in \N}$, for $k$ sufficiently large, have the following convergence rates 
$$
  |\theta^{FW}(x^k)|\approx \mathcal{O}\left(\frac{1}{k^\frac{q}{2(q-2)}}\right),\quad \theta^k_{best}\approx \mathcal{O}\left(\frac{1}{k^\frac{q-1}{q-2}}\right), \quad \mbox{for } q>2.
$$
For the particular case in which $q=2$, these convergence are further improved. Indeed,  from  the first inequality in \eqref{ineq:basicRecursive-thetatilde}, \eqref{convrate-thetak-3} and the assumption $|\tilde \theta(x^k)|\geq c>0$, we have 
$$
 \min\left\{1,\frac{\alpha c}{4L}\right\}\sum_{i=m}^k|\theta^{FW}(x^i)|\leq 2\hat h(x^m), \quad \forall k\geq m\geq 1,
$$
which, combined with \eqref{rt65} with $q=2$ and the fact that $\hat h(x^1)\leq L\D_\X^2/2$ (see \eqref{boundh}), implies that  $$
\sum_{i=m}^k|\theta^{FW}(x^i)|\leq 2\max\left\{1,\frac{4L}{\alpha c}\right\}\frac{LD_{\X}^2}2\max\left\{\frac12,1-\frac{\zeta}{L}\right\}^{m-1}, \quad \forall k\geq m\geq 1.
$$
The above inequality  with $m=k$ and $m=\lfloor k/2 \rfloor$ implies, respectively, the improved convergence rates 
\begin{align}
|\theta^{FW}(x^k)|\approx \mathcal{O}\left(\beta^k\right),\quad 
\theta^k_{best} \approx \mathcal{O}\left(\frac{\beta^k}{k}\right),
\end{align}
where $\beta:=\max\left\{\frac12,1-\frac{\zeta}{L}\right\}<1$. The  above convergence  rates  for  $(\theta^k_{best})_{k\in \N}$ are better than  the rate of ${\cal O}(1/k)$  established in  {{\cite[Theorem~16]{Leandro2022}}}  under assumption {\bf (A1)}. It is worth mentioning that the latter reference does not contain convergence rate for $(\theta(x^k))_{k\in \N}$.
\end{remark}

Next we discuss some properties about the  quantity $(\tilde \theta(x^k))_{k\in \N}$ computed in \eqref{def:rk} and present some conditions under which the assumption considered on this sequence in Theorem~\ref{theorem:Xunifconv} holds.
\begin{remark}\label{rem:thetatildexk}
(i) Let $\|\cdot\|_2$ denote the norm induced by the inner product $\langle \cdot, \cdot\rangle$ in ${\mathcal Y}$.  It is easy to see that in the scalar setting, i.e., $m=1$, we have $\tilde \theta(x^k)=-\|\nabla f(x^k)\|_2$. Hence, in this case, the assumption on the  quantity $\tilde \theta(x^k)$ considered in Theorem~\ref{theorem:Xunifconv} coincides with the one in \cite{kerdreux2020,survey2022,garber2015faster}. (ii) For a given point $x\in \mathbb{R}^n$, the multiobjective steepest descent direction at $x$, see \cite{benar&fliege}, is defined as
$$
d^s(x):=\argmin_{d\in \mathbb{R}^n} \max_{j \in \J} \langle \nabla F_j(x),d\rangle +\frac{1}{2}\|d\|_2^2.
$$
It can be shown, see \cite[Corollary 2.3]{SVAITER2018430}, that  $\max_{j \in \J}\inner{\nabla F_j(x)}{d^s(x)}= -\|d^s(x)\|_2^2.$ Now, since ${\mathcal Y}$ is a finite dimensional space, there exists $\kappa>0$ such that $\|
x\|_2\geq \kappa \|x\|$ for all $x\in {\mathcal Y}$. Hence, for every $k\geq 0$,
\[
\tilde \theta(x^k)=\min_{\|z\|\leq 1}\max_{{j \in \J  }}\inner{\nabla F_j(x^k)}{z}
\leq \max_{j \in \J  } \left\langle {\nabla F_j(x^k)},{\frac{d^s(x^k)}{\|d^s(x^k)\|}}\right\rangle=-\frac{\|d^s(x^k)\|^2_2}{\|d^s(x^k)\|}\leq -\kappa\|d^s(x^k)\|_2,
\]
which implies that 
\begin{equation}\label{56lo}
|\tilde \theta(x^k)|\geq \kappa\|d^s(x^k)\|_2.
\end{equation}
Thus, the assumption on $(\tilde \theta(x^k))_{k\in \N}$ in Theorem~\ref{theorem:Xunifconv} holds if it is assumed that $\inf_{k\in  \N}\|d^s(x^k)\|_2>0$. 
Additionally, it is worth  pointing out that the latter assumption can be ensured by requiring that the constraint set $\X$ does not  contain any  unconstrained weak Pareto point of the objective function $F$. Indeed, this observation follows from the fact that $(x^k)_{k \in \N}\subset \X$, $\X$ is compact, $d^s(\cdot)$ is a continuous function and $d^s(x)=0$ if and only if $x$ is an unconstrained weak Pareto point of $F$, see \cite[Lemma~1]{benar&fliege}.
\end{remark}

\section{Illustrative examples}\label{sec:experiments}

In this section, we present some instances of \eqref{vectorproblem} to illustrate the behavior of the M-FW algorithm under the scenarios described in Theorems~\ref{theo:interior}, \ref{uniformly convex2} and \ref{theorem:Xunifconv}. These examples provide concrete situations to show how different set of assumptions considered in  the paper can affect the performance of the method.


We begin by illustrating that assuming only the strong convexity of  $F$ is insufficient to improve the sublinear convergence rate of the M-FW algorithm.

Let 
\begin{equation} \label{eq:o0}
 F(x) = \frac{1}{2}( \|x - b \|_2^2 , \quad \| x - c \|_2^2)^T,   
\end{equation}
and the feasible constraint set 
\[
\X = \{ x \in \mathbb{R}^2 \, : \,  \| x \|_1 \leq 1 \}.
\]
It is easy to see that, regardless the points $b, c \in \R^2$, we have $\mu = L = 1$ and $D_{\X} = 2$.

Consider $b = (-0.6, -0.6)^T$ and $c = (-0.5, -0.5)^T$. It is not hard to see that the unconstrained Pareto set intersects the feasible set $\X$ in a single boundary point $(-0.5, -0.5)^T$ (see Figure~\ref{fig:example1a}(left)). In this case, we observe the sublinear convergence rate of the sequence $( \hat{h}(x^k))_{k\in \N}$ generated by the M-FW algorithm (Figure~\ref{fig:example1a}(right)). 

Now, consider the same $b$ as before, but $c = (-0.01,-0.01)^T$ instead. Since $(-0.01,-0.01)^T$ is a Pareto point in the relative interior of $\X$ and the objectives are strongly convex, Theorem~\ref{theo:interior} applies and the linear convergence rate of $( \hat{h}(x^k))_{k\in \N}$ is illustrated in Figure~\ref{fig:example1b}(right). 

\begin{figure}
    \centering
    \includegraphics[scale=0.34]{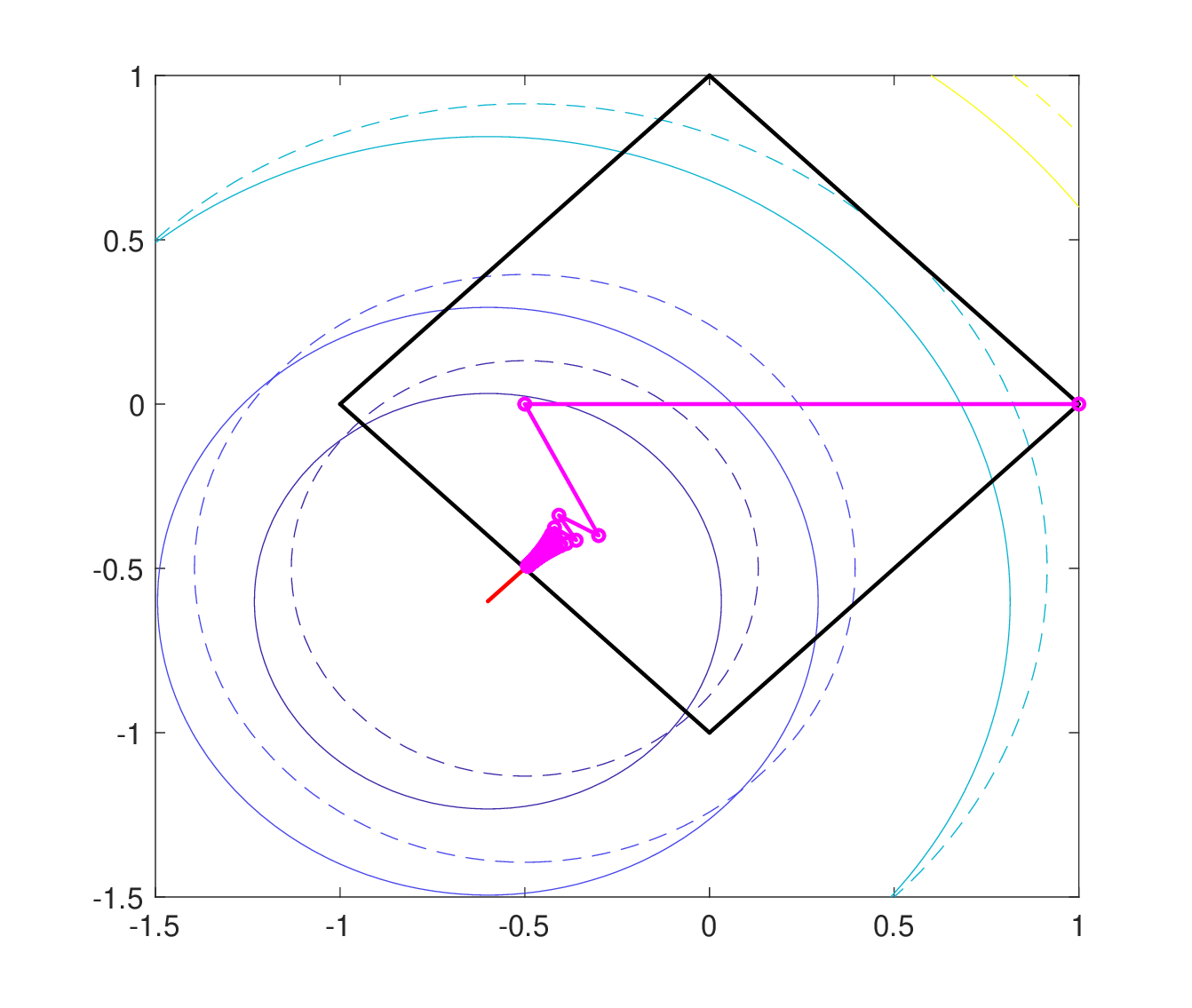}
    \includegraphics[scale=0.34]{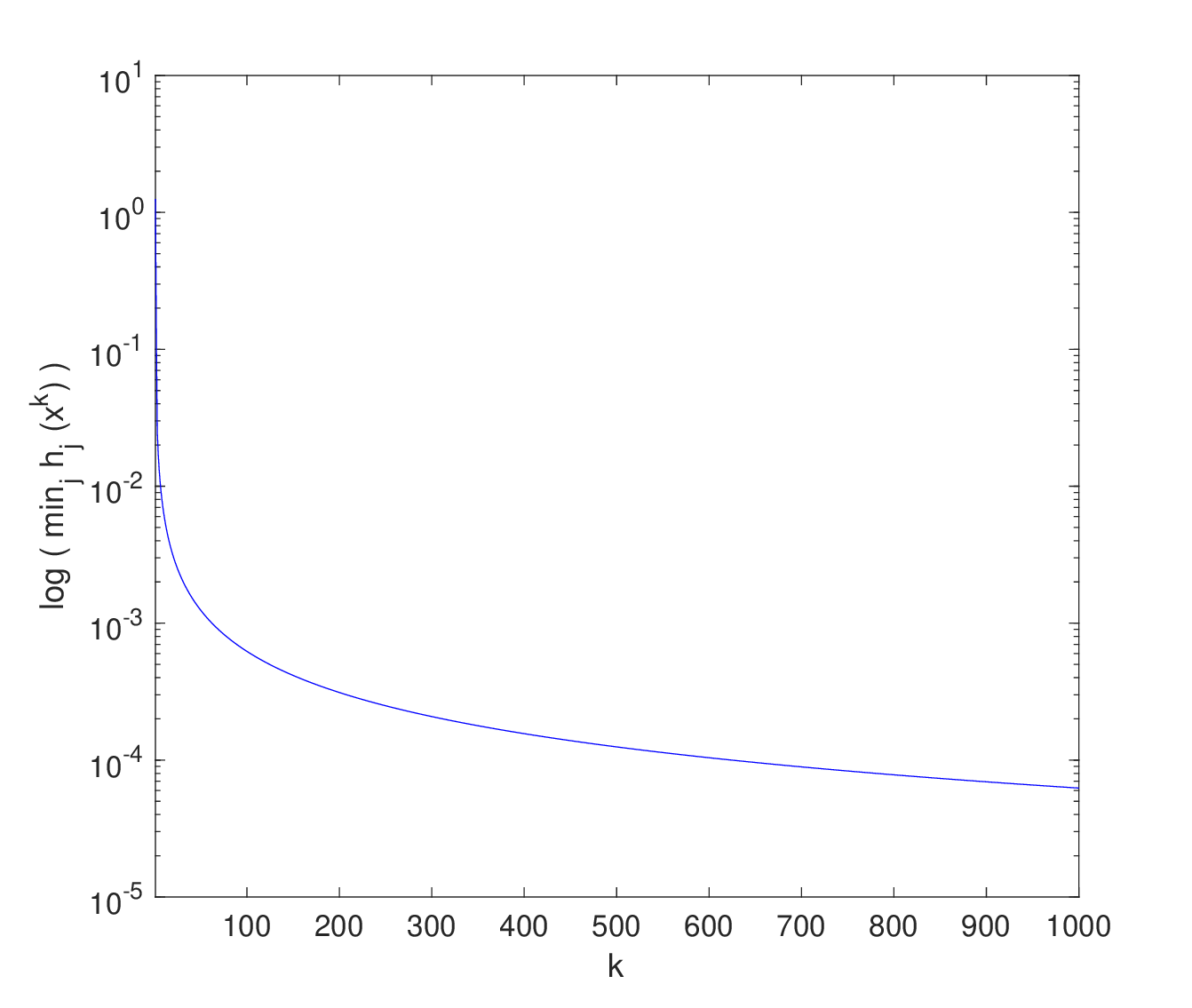}
    \caption{On the left we illustrate the unconstrained Pareto set (red line segment), the level curves of the two objectives, the constraint feasible set, and the trajectory of M-FW. On the right, the sublinear convergence rate is illustrated by plotting $\log \hat{h}(x^k)$ versus the number of iterations.}
    \label{fig:example1a}
\end{figure}

\begin{figure}
    \centering
    \includegraphics[scale=0.34]{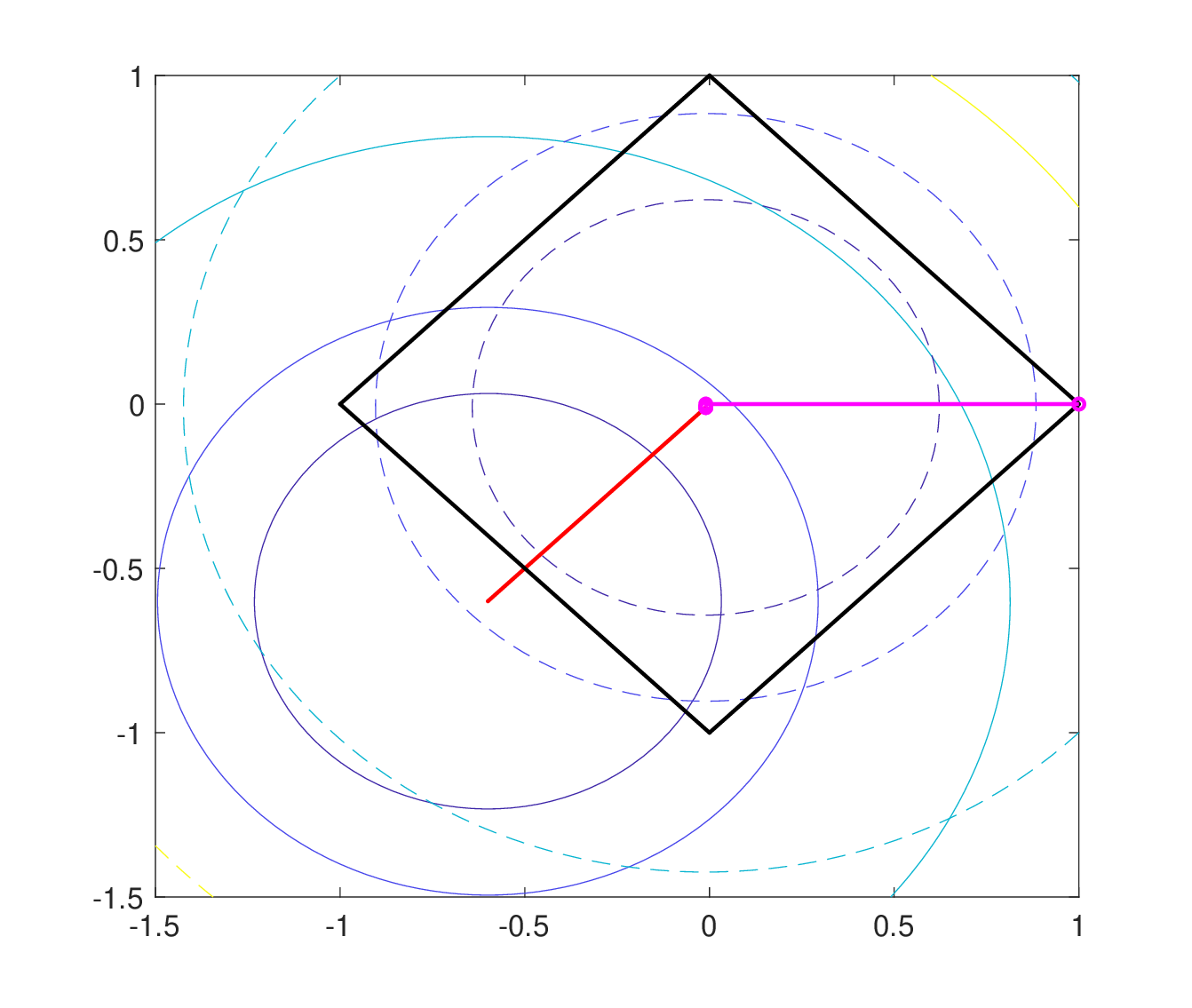}
    \includegraphics[scale=0.34]{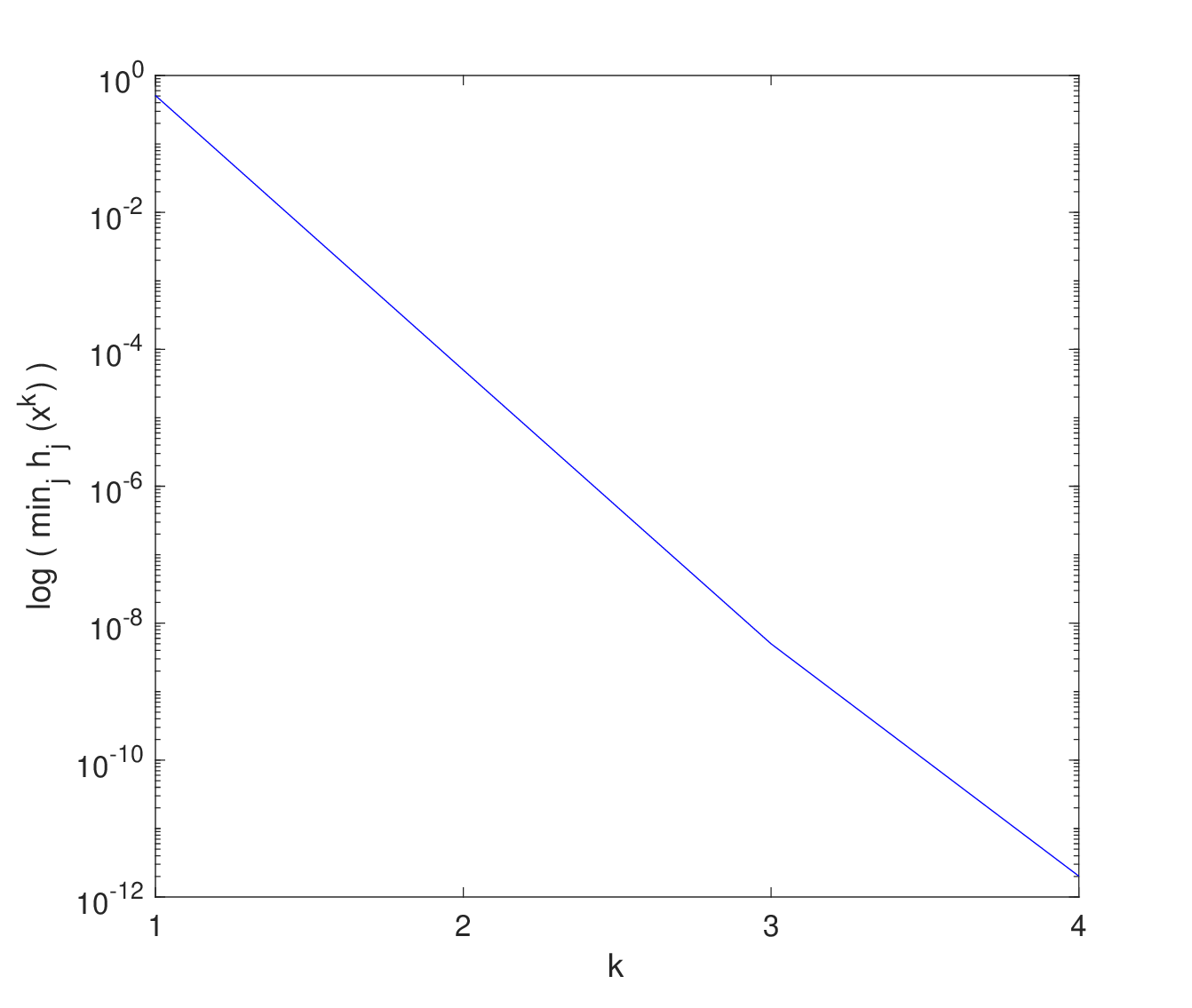}
    \caption{On the left we illustrate the unconstrained Pareto set (red line segment), the level curves of the two objectives, the constraint feasible set, and the trajectory of M-FW. On the right, the linear convergence rate is illustrated by plotting $\log \hat{h}(x^k)$ versus the number of iterations.}
    \label{fig:example1b}
\end{figure}

In order to illustrate Theorem~\ref{uniformly convex2}, 
 consider $F$ as in \eqref{eq:o0}, but  the feasible region as
\[
\X = \{ x \in \mathbb{R}^n \, : \,  \| x \|_2 \leq 1 \}, 
\]
which is an ($1$,$2$)-uniformly convex set.  
Take  $b=(-1/\sqrt{2}, -1/\sqrt{2})^T$ and $c=(-3/4, -3/4)^T$. In this setting, since {\bf (A2)} holds and $q=2$, we should observe an ${\cal O}(1/k^2)$ convergence rate as in Theorem~\ref{uniformly convex2}. This is illustrated in Figure~\ref{fig:example3}. 

\begin{figure}
    \centering
    \includegraphics[scale=0.3]{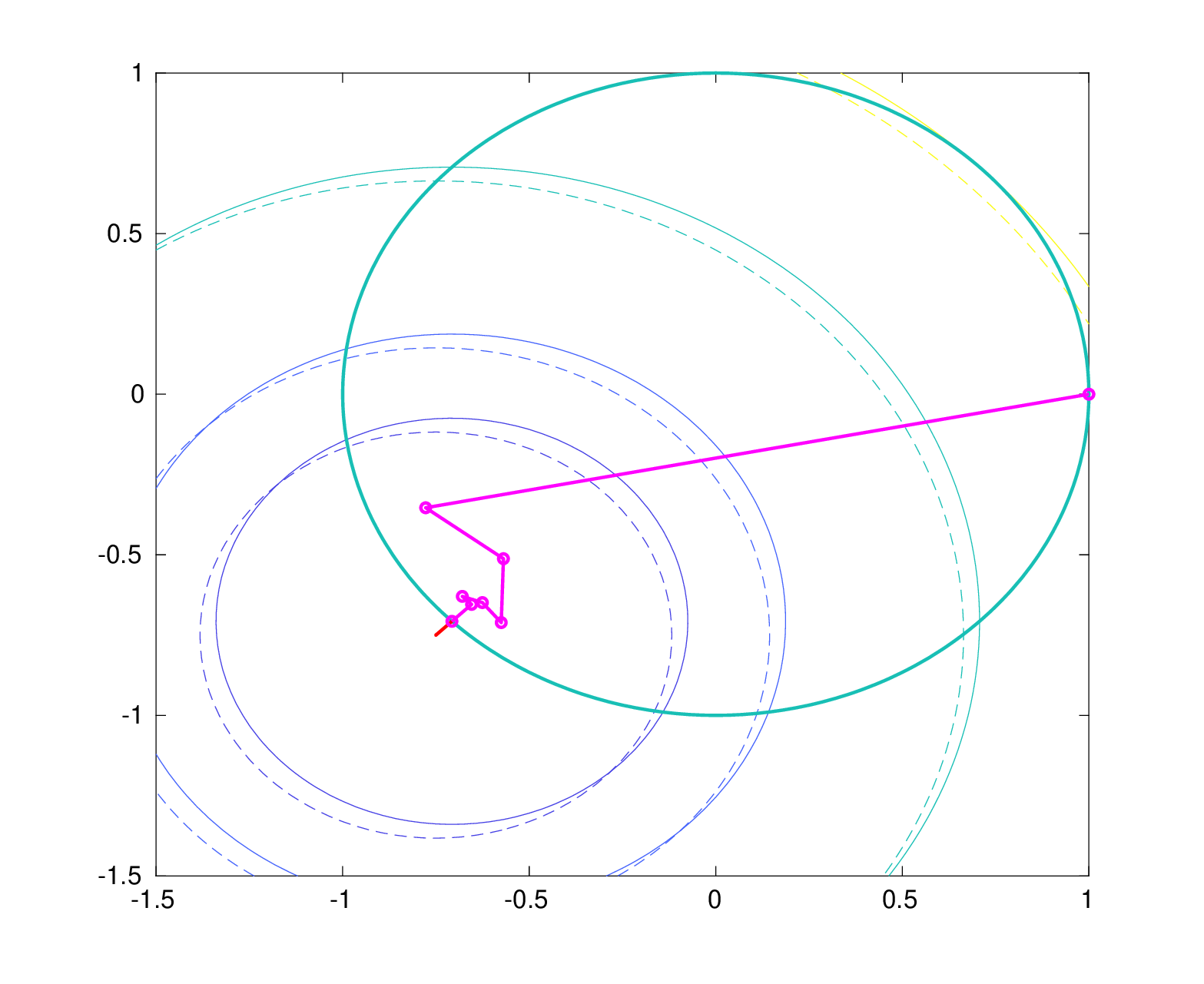}
    \includegraphics[scale=0.3]{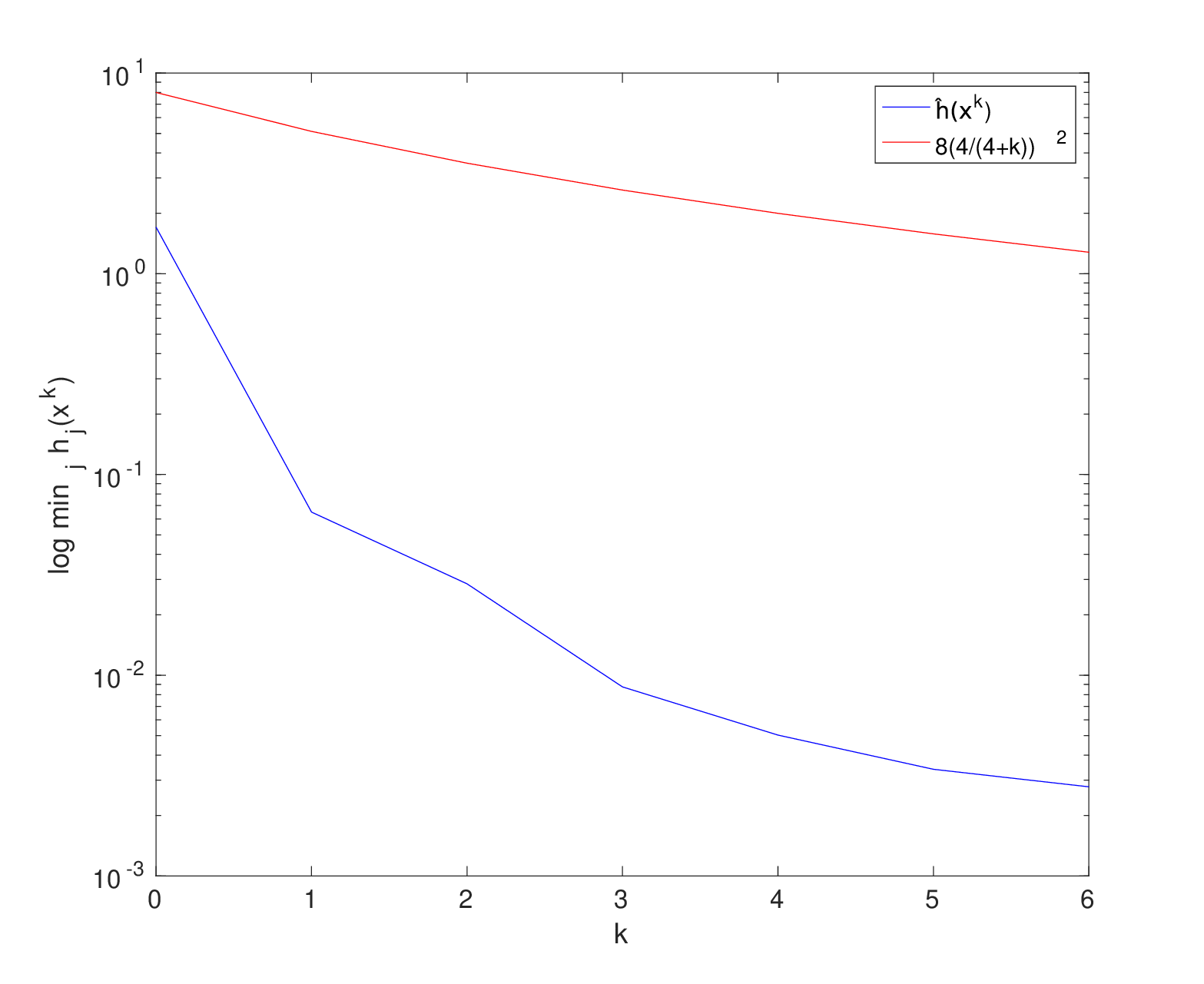}
    \caption{On the left we illustrate the unconstrained Pareto set (red line segment), the level curves of the two objectives, the constraint feasible set, and the trajectory of M-FW. On the right, the ${\cal O}(1/k^2)$ rate is illustrated by plotting $\log \hat{h}(x^k)$ versus the number of iterations.}
    \label{fig:example3}
\end{figure}

Finally, recall that  Theorem~\ref{theorem:Xunifconv} does not require the strong convexity of the objectives, only the ($\alpha$,$q$)-uniform  convexity of the feasible set. However, the extra condition is that $\tilde{\theta}(x)$ stays bounded away from zero, at least at the iterates generated by M-FW. 
Consider now
\[
F(x) = \frac{1}{2}( \|Ax - b_1 \|_2^2 , \quad \| Ax - b_2 \|_2^2)^T,
\]
where $A = e_1 e_1^T$, $e_1 = (1, 0)^T$, $b_1 = (-1.1,0)^T$ and $b_2 =(-1.3,0)^T$, with 
\[
\X = \{ x \in \mathbb{R}^2 \, : \,  \| x \|_2 \leq 1 \}
\]
as feasible region. Since $A$ is only positive semidefinite, the objectives are convex but not strongly convex. As in the previous example, $\X$ is (1,2)-uniformly convex.  

Thus, according to Theorem~\ref{theorem:Xunifconv} we should expect a linear convergence rate (since $q=2$) as long as  $c:=\inf_k|\tilde \theta(x^k)|>0$. In order to estimate $c$, we observe that $\langle \nabla F_j(x), z \rangle = (x_1 - b_{j1})z_1$, for $j=1,2$. Also, $x_1 - b_{j1} \geq -1 - b_{11} = 0.1 > 0$, for $x \in \X$. Hence, the minimizer of $\max\{\langle \nabla F_1(x), z \rangle, \langle \nabla F_2(x), z \rangle\}$ for $z \in \X$ is $z=(-1,0)^T$ with optimal value $\max \{ b_{11} - x_1, b_{21} - x_1 \} = b_{11} - x_1 = -1.1 - x_1 \geq 0.1$ for $x \in \X$, so we obtain that $c \geq 0.1 > 0$.
Figure~\ref{fig:example4} illustrates the linear convergence rate of the M-FW algorithm in this example. 

\begin{figure}
    \centering
    \includegraphics[scale=0.3]{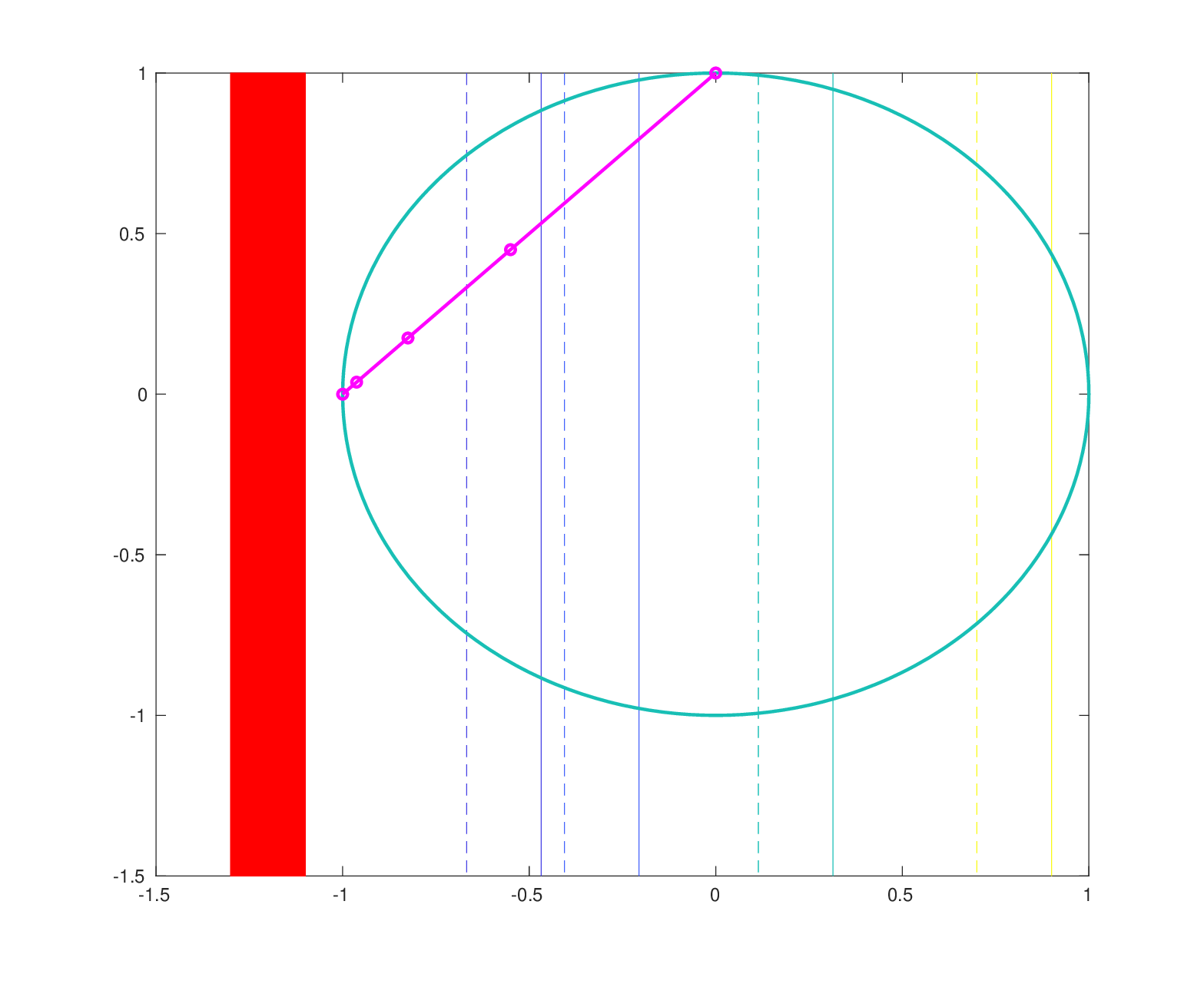}
    \includegraphics[scale=0.3]{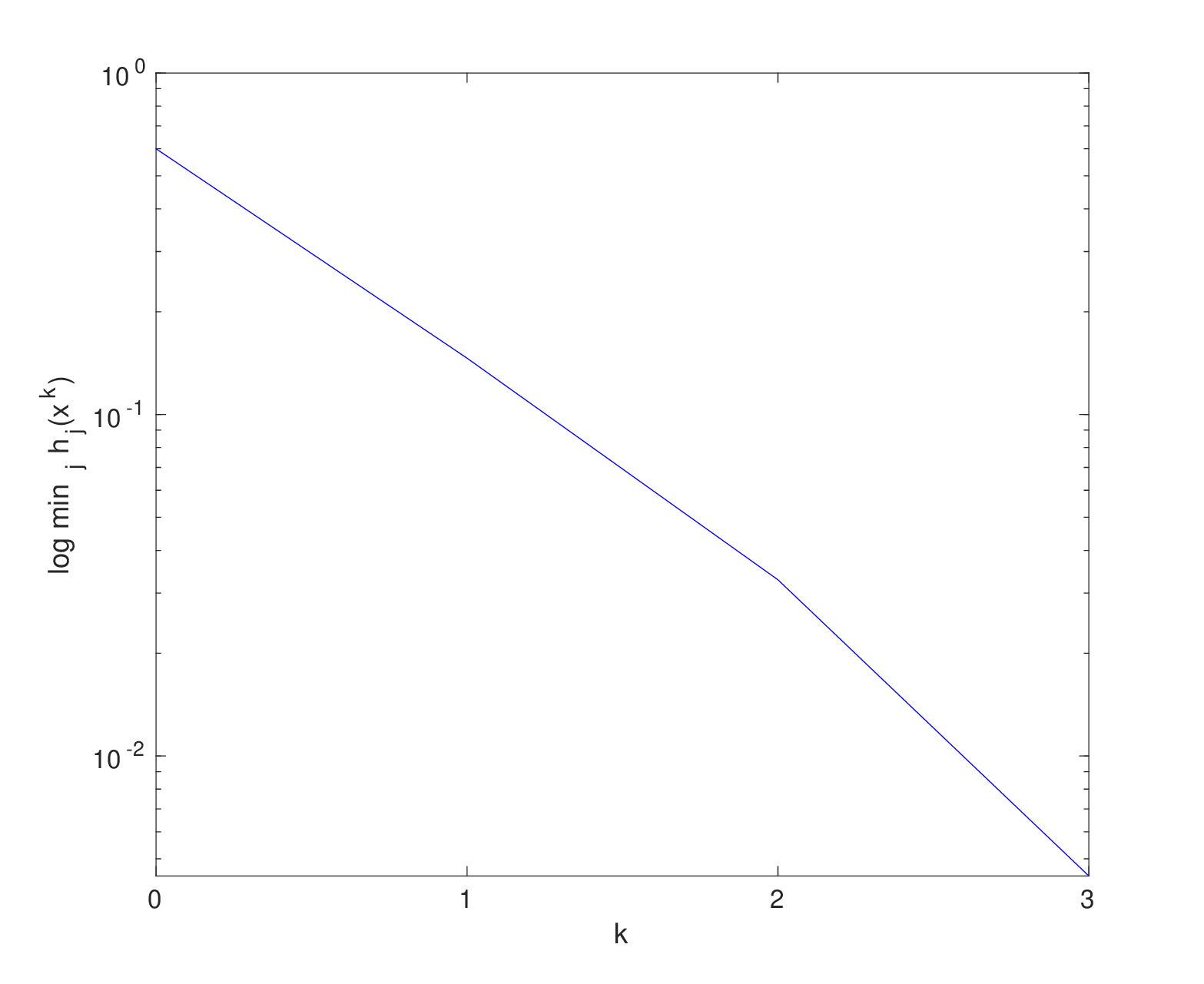}
    \caption{On the left we illustrate the unconstrained Pareto (red rectangle), the level curves of the two objectives, the constraint feasible set, and the trajectory of M-FW. On the right, the linear convergence rate is illustrated by plotting $\log \hat{h}(x^k)$ versus the number of iterations.}
    \label{fig:example4}
\end{figure}

\section{Conclusion}\label{sec:final}
This paper established convergence rates for the multiobjective Frank-Wolfe algorithm for solving constrained convex multiobjective optimization problems. It was shown that the algorithm achieves faster sublinear as well as linear convergence rates under different set of assumptions such as strong convexity of the multiobjective  function and uniform convexity of the constraint set. Illustrative  examples were presented to showcase the convergence rates obtained and the assumptions considered.

\section*{Data availability statement}
Data sharing not applicable – no new data generated, or the article describes entirely theoretical research.

\section*{Conflict of interest}
The authors have no Conflict of interest to declare that are relevant to the content of this article.

\bibliographystyle{abbrv}
\bibliography{Referencias}

\end{document}